\documentclass[11pt]{article}
\usepackage[colorlinks]{hyperref}
\usepackage{color}
\usepackage{graphicx}
\usepackage{graphics}
\usepackage{makeidx}
\usepackage{showidx}
\usepackage{latexsym}
\usepackage{amssymb}
\usepackage{verbatim}
\usepackage{amsmath}
\usepackage{amsthm}
\usepackage{amsfonts}
\usepackage{subcaption} 
\usepackage[abs]{overpic}
\usepackage{xcolor,varwidth}
\usepackage{geometry}
\geometry{letterpaper, margin=1in}

\newtheorem{theorem}{Theorem}[section]
\newtheorem{proposition}[theorem]{Proposition}
\newtheorem{remark}[theorem]{Remark}
\newtheorem{claim}[theorem]{Claim}

\newtheorem{definition}[theorem]{Definition}
\newtheorem{corollary}[theorem]{Corollary}
\newtheorem{lemma}[theorem]{Lemma}

\newtheorem{corollaryproof}[theorem]{Corollary of the Proof}

\title{On Anosovity, divergence and bi-contact surgery}
\author{Surena Hozoori}
\newcommand{\Addresses}{{% additional braces for segregating \footnotesize
  \bigskip
  \footnotesize

Surena Hozoori, \textsc{Department of Mathematics, University of Rochester, Rochester, NY 14627}\par\nopagebreak
  \textit{E-mail address}: \texttt{shozoori@ur.rochester.edu}
  
  }}
    \date{}
\begin{document}
\maketitle
\noindent
\begin{abstract}
We discuss a metric description of the divergence of a (projectively) Anosov flow in dimension~3, in terms of its associated expansion rates and give metric and contact geometric characterizations of when a projectively Anosov flow is Anosov. We then study the symmetries that the existence of an invariant volume form yields on the geometry of an Anosov flow, from various viewpoints of the theory of contact hyperbolas, Reeb dynamics and Liouville geometry, and give characterizations of when an Anosov flow is volume preserving, in terms of those theories. We finally use our study to show that the bi-contact surgery operations of Salmoiraghi \cite{salmoi,salmoi2} can be applied in an arbitrary small neighborhood of a periodic orbit of any Anosov flow. In particular, we conclude that the Goodman surgery of Anosov flows can be performed using the bi-contact surgery of \cite{salmoi2}.
\end{abstract}

%%%%%%%%%%%%%%%%%%%%%%%%%%%%%
\section{Introduction}\label{1}

For almost two decades after the introduction of Anosov flows in the early 1960s \cite{anosov0,anosov}, the only known examples of Anosov flows on three dimensional closed manifolds were based on either the suspension of Anosov diffeomorphisms of 2-torus, or the geodesic flows on the unit tangent space of hyperbolic surfaces. All such examples are orbit equivalent to an algebraic volume preserving flow, by their natural construction and a lot of interesting properties of Anosov flows were derived, assuming the existence of such invariant volume forms. However, the first examples of Anosov flows, which are not orbit equivalent to a volume preserving one, were constructed in 1980 by Franks and Williams~\cite{anomal}. Since then, understanding the relation between the existence of an invariant volume form and various aspects of Anosov dynamics has been studied from different viewpoints. In particular, from a topological viewpoint, such property is associated with the {\em transitivity} of an Anosov flow \cite{asaokaverj} and from a measure theoretic viewpoint, they correspond to ergodic Anosov flows \cite{anosov0,marg}. Moreover, many other dynamical aspects of such flows, including the regularity theoretical aspects, are well-studied in the literature (for instance, see \cite{sinai,hruder}).

Our goal in this paper, is to study the relation between the divergence of a flow and Anosovity, in the context of a larger class of dynamics, namely the class of {\em projectively Anosov flows}, and using the notion of {\em expansion rates of the invariant bundles}. These quantities measure the infinitesimal change of the length of vectors in the stable and unstable directions, and facilitate geometric understanding of Anosov flows. In particular, they play a significant role in the more recent {\em contact and symplectic geometric theory} of Anosov flows \cite{mitsumatsu,hoz3,salmoi}. Therefore, our study provides new perspective on the class of volume preserving Anosov flows, in terms of those geometries.

It is worth mentioning that although projectively Anosov flows have been previously studied in various contexts, such as foliation theory \cite{confoliations,asaokareg,noda,colin}, Riemannian geometry \cite{bl,blperrone,perrone,hoz2}, hyperbolic dynamics \cite{inv,hertz,puj1,puj2} and Reeb dynamics \cite{hoz1}, their primary significance for us is that they serve as bridge between Anosov dynamics and contact and symplectic geometry \cite{mitsumatsu} (see Section~\ref{2.2}), eventually yielding a complete characterization of Anosov flows in terms of such geometries \cite{hoz3}. We remark that such flows are also referred to in the literature, using other names including {\em conformally Anosov flows} or {\em flows with dominated splitting}.

\vskip0.5cm

\noindent\fbox{%
    \parbox{\textwidth}{%
\textbf{Assumptions:} In this paper, unless stated otherwise, we assume that $M$ is a closed connected oriented three manifold and $X$ is a non-vanishing $C^{1+}$ vector field, i.e. a $C^1$ vector field with Hölder continuous derivatives. We denote the $C^{1+}$ flow generated by $X$ by $\phi^t$. It is noteworthy that there are other vector fields and flows involved in this paper, for instance the Reeb vector fields of Theorem~\ref{cartanintro} and Theorem~\ref{liouintro}, for which we do not assume any regularity and in fact, are often only $C^0$ (also see Remark~\ref{reebreg}). We also assume the (projectively) Anosov flows to have transversely orientable invariant bundles. This is always achieved, possibly after lifting to a double cover of $M$. Moreover, we call any geometric quantity, which is differentiable in the direction of the flow, {\em $X$-differentiable}. The reader should consult \cite{palis} for the basics of the theory of flows on manifolds, and \cite{hflows} for the fundamentals of hyperbolic flows.        
    }%
}
\vskip0.5cm

We begin our study with a natural description of the divergence of a projectively Anosov flow in terms of its associated expansion rates of the invariant bundles, encapsulated in the following two theorems:

\begin{theorem}\label{metric1intro}
Let $X$ be the generator of a projectively Anosov flow on $M$ and $\Omega$ be some volume form which is $X$-differentiable. There exists a metric on $M$, such that $div_X\Omega=r_s+r_u$, where $r_s$ and $r_u$ are the expansion rates of the stable and unstable directions, respectively, measured by such metric.
\end{theorem}

\begin{theorem}\label{metric2intro}
Let $X$ be the generator of a projectively Anosov flow and $||.||$ some $X$-differentiable norm on $TM$, induced by a metric. Also, let $r_s$ and $r_u$ be the expansion rates of the stable and unstable bundles, measured by $||.||$. Then,

 (a) there exists a volume form $\Omega$ on $M$, which is $X$-differentiable and $div_X\Omega=r_s+r_u$,

 (b) for any $\epsilon>0$, there exists a $C^1$ volume form $\Omega^\epsilon$, such that $|div_X\Omega^\epsilon-(r_s+r_u)|<\epsilon$.
\end{theorem}

Although the above description of the divergence is hardly surprising, it accommodates the use of such relation from the viewpoint of differential and contact geometry. One immediate corollary~is 

\begin{corollary}\label{volproj}
Any projectively Anosov flow preserving some $C^0$ volume form is Anosov. In particular, any {\em contact} projectively Anosov flow (that is when a projectively Anosov flow preserves a transverse contact structure) is Anosov.
\end{corollary}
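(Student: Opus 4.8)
The plan is to show that volume preservation pins down the \emph{sum} $r_s+r_u$ of the growth rates, while projective Anosovity already controls their \emph{difference} $r_u-r_s$; together these force $\int_0^t r_s\to-\infty$ and $\int_0^t r_u\to+\infty$ at a uniform linear rate, which is exactly exponential contraction of $E^s$ and expansion of $E^u$, i.e.\ Anosovity. I fix once and for all a metric with associated growth rates $r_s,r_u$ for the oriented stable and unstable line bundles $E^s,E^u$; since these are line bundles, $r_s$ and $r_u$ are well-defined functions, and for a unit section $v$ of $E^\bullet$ one has $\log\|\phi^t_\ast v\|=\int_0^t r_\bullet(\phi^\tau p)\,d\tau$. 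Domination (the defining property of projective Anosovity) supplies constants $C_1\ge 0$ and $\lambda>0$ with $\int_0^t (r_u-r_s)(\phi^\tau p)\,d\tau\ge \lambda t-C_1$ for all $p$ and $t\ge 0$; this integrated inequality is independent of the chosen metric, up to the constants.

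The heart of the argument is to bound $\int_0^t(r_s+r_u)$ using the invariant volume. I would first apply Theorem~\ref{metric2intro}(a) to produce an $X$-differentiable volume form $\Omega'$ with $div_X\Omega'=r_s+r_u$. Writing the given invariant volume form as $\Omega=f\,\Omega'$ for a continuous, strictly positive $f$, invariance $(\phi^t)^\ast\Omega=\Omega$ together with the identity $(\phi^t)^\ast\Omega'=\exp\!\big(\int_0^t div_X\Omega'\circ\phi^\tau\,d\tau\big)\,\Omega'$ yields the coboundary relation
\[
\int_0^t (r_s+r_u)(\phi^\tau p)\,d\tau \;=\; \log f(p)-\log f(\phi^t(p)).
\]
Since $M$ is closed and $f$ is continuous and positive, $\log f$ is bounded, so $\big|\int_0^t(r_s+r_u)(\phi^\tau p)\,d\tau\big|\le C_0$ for a constant $C_0$ independent of $p$ and $t$. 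Routing through $\Omega'$ is precisely what allows $\Omega$ to be only $C^0$: the coboundary identity uses only that $f$ is continuous (the exponential Jacobian factor is built from the smooth flow and the $X$-differentiable $\Omega'$), so no derivative of $f$ is needed.

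Combining the two bounds, $\int_0^t r_u=\tfrac12\big[\int_0^t(r_s+r_u)+\int_0^t(r_u-r_s)\big]\ge \tfrac{\lambda}{2}t-\tfrac{C_0+C_1}{2}$ and likewise $\int_0^t r_s\le -\tfrac{\lambda}{2}t+\tfrac{C_0+C_1}{2}$. Hence unit unstable vectors grow and unit stable vectors decay exponentially with uniform rate $\lambda/2$ and uniform constants, so the already-present dominated splitting is genuinely hyperbolic and the flow is Anosov. For the contact case I would normalize the preserved transverse contact structure by a contact form $\alpha$ with $\alpha(X)=1$; then preservation of $\ker\alpha$ reads $\mathcal L_X\alpha=\mu\alpha$, and evaluating on $X$ gives $\mu=\mathcal L_X\alpha(X)=X(\alpha(X))=0$, whence $\mathcal L_X\alpha=0$ and $\alpha\wedge d\alpha$ is a smooth invariant volume form; the statement then follows from the volume-preserving case.

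I expect the main obstacle to be the $C^0$ hypothesis, since $div_X\Omega$ is not classically defined for a merely continuous $\Omega$; the coboundary computation above is what circumvents this, by transferring all differentiation onto the smooth flow and the $X$-differentiable reference form $\Omega'$ and leaving only the bounded, undifferentiated potential $\log f$. A secondary point to verify is that the integrated domination inequality holds for the \emph{same} metric used to define $r_s,r_u$ in Theorem~\ref{metric2intro}; this is immediate because domination is a metric-independent property of the flow, affecting only the constants $C_1$ and $\lambda$.
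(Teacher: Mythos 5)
Your proof is correct, but it takes a genuinely different route from the paper's. The paper's argument is two lines: a preserved $C^0$ volume form $\Omega$ is automatically $X$-differentiable with $\mathcal L_X\Omega=0$, so Theorem~\ref{metric1} produces a metric in which $r_s+r_u=div_X\Omega=0$ \emph{pointwise}; combined with the pointwise domination inequality $r_u-r_s>0$ of Proposition~\ref{rateca}, this gives $r_s<0<r_u$ everywhere, hence Anosovity by Proposition~\ref{rateanosov}. You instead fix an arbitrary metric, route through Theorem~\ref{metric2intro}(a), and work entirely with \emph{integrated} growth rates: the coboundary identity $\int_0^t(r_s+r_u)\circ\phi^\tau\,d\tau=\log f(p)-\log f(\phi^t p)$ bounds the sum uniformly, while integrated domination gives linear growth of the difference, and averaging yields uniform exponential expansion/contraction directly. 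What your approach buys is robustness: you never need the metric to be adapted (i.e.\ you never need $r_u-r_s>0$ to hold pointwise for the particular metric in play, only the metric-independent integrated version), and you sidestep entirely the question of whether $div_X\Omega$ makes classical sense for a merely continuous $\Omega$ by pushing all differentiation onto the flow and the reference form. What the paper's route buys is brevity, at the cost of the (easily justified but unstated) observation that invariance itself supplies the $X$-differentiability needed to invoke Theorem~\ref{metric1}, and of implicitly using a metric for which domination holds pointwise. Your treatment of the contact case --- normalizing by $\alpha(X)=1$, deducing $\mathcal L_X\alpha=0$ and hence invariance of $\alpha\wedge d\alpha$ --- is correct and is in fact more explicit than the paper, which leaves that reduction to the reader.
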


Although the above corollary is well known in the dynamical systems literature (for instance see \cite{threeflow}), it seems that this fact is unexpectedly left obscured in some other areas of research, most importantly when such flows appear in the Riemannian geometry literature. While contributing meaningfully to the related subjects, one can find many interesting results on the Riemannian geometry of contact projectively Anosov flows, ignoring that they are in fact Anosov (for instance, see \cite{bl,blperrone}).

It is well known that many important properties of (projectively) Anosov flows are independent of the norm involved in their definition. On the other hand, there are natural volume forms for such setting, induced from the underlying contact structures of these flows (see section~\ref{2.2}). It turns out that we can characterize the Anosovity of a projectively Anosov flow, in terms of the divergence of the flow being bounded by these volume forms in an appropriate sense (see Remark~\ref{inducedvol}).

\begin{theorem}\label{contcharintro}
Let $X$ be the generating vector field for a projectively Anosov flow. Then, the followings are equivalent:

(1) $X$ is Anosov.

(2) There exists a positive contact form $\alpha_+$, such that for some $\xi_-$, the pair $(\xi_-,\xi_+:=\ker{\alpha_+})$ is a supporting bi-contact structure and $-\alpha_+ \wedge d\alpha_+<(div_X\Omega^{\alpha_+})\Omega^{\alpha_+} < \alpha_+\wedge d \alpha_+.$

(3) There exists a negative contact form $\alpha_-$, such that for some $\xi_+$, the pair $(\xi_-:=\ker{\alpha_-},\xi_+)$ is a supporting bi-contact structure and $\alpha_- \wedge d\alpha_-<(div_X\Omega^{\alpha_-})\Omega^{\alpha_-} < -\alpha_-\wedge d \alpha_-.$
\end{theorem}

\begin{remark}
We remark that Theorem~\ref{metric1intro}-\ref{contcharintro} and Corollary~\ref{volproj} above hold for any $C^1$ flow (without the assumption of Hölder continuity for its derivative). However, for Theorem~\ref{contcharintro} in that case, we would need the approximation techniques developed in \cite{hoz3} to deal with $C^0$ weak stable and unstable bundles, since the Hölder continuity of the derivatives of the flow is needed to ensure such invariant plane fields are $C^1$, which is used in the proof of Theorem~\ref{contcharintro} for simplicity.
\end{remark}

Using our description of the divergence of an Anosov flow, we next study the geometric consequences of the existence of an invariant volume form for an Anosov flow, from various viewpoints. More precisely, Theorem~\ref{metric2intro} shows the symmetry of expansion and contraction in the unstable and stable directions, respectively, in the case of volume preserving Anosov flows and furthermore, thanks to the differentiability of the weak stable and unstable bundles in this case \cite{hruder,regular}, such symmetry behaves well, when translating the metric description of Anosov flows to the contact geometric one. We study such symmetry from the view point of {\em the theory of contact hyperbolas}, {\em Reeb dynamics} and {\em Liouville geometry}, giving various characterizations of volume preserving Anosov flows.

To begin with, we study volume preserving Anosov flows in terms of the theory of contact hyperbolas, developed by Perrone~\cite{perrone} (see Section~\ref{5.2} for definitions), as an analogue of the theory of {\em contact circles} by Geiges-Gonzalo~\cite{circle2,circle}. Moreover, we will see that these conditions are, in fact, equivalent to a purely Reeb dynamical description of volume preserving Anosov flows.

\begin{theorem}\label{cartanintro}
Let $\phi^t$ be a projectively Anosov flow on $M$. Then, the followings are equivalent:

(1) The flow $\phi^t$ is a volume preserving Anosov flow.

(2) There exists a supporting bi-contact structure $(\xi_-,\xi_+)$ and contact forms $\alpha_-$ and $\alpha_+$ for $\xi_-$ and $\xi_+$, respectively, such that $(\alpha_-,\alpha_+)$ is a $(-1)$-Cartan structure.

(3) There exists a supporting bi-contact structure $(\xi_-,\xi_+)$ and Reeb vector fields $R_{\alpha_-}$ and $R_{\alpha_+}$ for $\xi_-$ and $\xi_+$, respectively, such that $R_{\alpha_-}\subset \xi_+$ and $R_{\alpha_+}\subset \xi_-$.
\end{theorem}

To the best of our knowledge, the only known examples of taut contact hyperbolas, except an explicit example constructed on $\mathbb{T}^3$, are achieved using the symmetries of Lie manifolds, giving examples which are compatible with algebraic Anosov flows \cite{perrone}. However, Theorem~\ref{cartanintro} shows that we can also construct examples of taut contact hyperbolas on hyperbolic manifolds, thanks to the construction of an infinite family of contact Anosov flows on hyperbolic manifolds by Foulon-Hasselblatt \cite{foulon}, as well as many examples on toroidal manifolds \cite{bbu}. We note that it is not known if any specific manifold can admit infinitely many distinct Anosov flows, while there are at most finitely many contact Anosov flows on any manifold up to orbit equivalence~\cite{mann}. This gives  a partial answer to the classification problem posed in the final remark of \cite{perrone}.

\begin{corollary}
There exist infinitely many hyperbolic manifolds which admit a $(-1)$-Cartan structure (and in particular, a taut contact hyperbola).
\end{corollary}

Moreover, we study volume preserving Anosov flows from the perspective of Liouville geometry. The construction of exact symplectic 4-manifold for a general Anosov 3-flow is done by the author in \cite{hoz3}. However, we observe that such construction is significantly simplified in the presence of an invariant volume form (the case previously studied by Mitsumatsu \cite{mitsumatsu}). In fact, after a canonical reparametrization of a volume preserving Anosov flow, we show that we can improve the relation between such flows and both the underlying Reeb dynamics of Theorem~\ref{cartanintro}, as well as the Liouville geometry associated with the corresponding exact symplectic 4-manifold. We call such reparametrization {\em the Liouville reparametrization of a volume preserving Anosov flow} (see Section~\ref{5.3} for definitions).

\begin{theorem}\label{liouintro}
Let $X$ be the generating vector field of a  volume preserving Anosov flow. If $X_L$ is the generating vector field for the Liouville reparametrization of the flow, the following holds:

(1) The flow generated by $X_L$ preserves the transverse plane field $\langle R_{\alpha_-},R_{\alpha_+} \rangle$, where $R_{\alpha_-}$ and $R_{\alpha_+}$ are the Reeb vector fields of Theorem~\ref{cartanintro} (2);

(2) The pair $(M,X_L)$ can be extended to a Liouville structure $([-1,1]\times M,Y)$, such that $([-1,1]\times M,Y)\big|_{\{0\}\times M}=(M,X_L).$
\end{theorem}

\begin{remark}
It is important to note that in the above theorem, the plane field generated by $R_{\alpha_-}$ and $R_{\alpha_+}$ is only continuous in the general case, and is $C^1$, if and only if the Liouville reparametrization generated by $X_L$ is contact or a suspension flow (this is done for $C^2$ flows in \cite{zyg}, but similar proof should work in the $C^{1+}$ category). In particular, this gives a bi-contact geometric way of distinguishing which Anosov flows are contact or a suspension flow. They are exactly the ones, where the constructed Reeb vector fields $R_{\alpha_-}$ and $R_{\alpha_+}$ in Theorem~\ref{liouintro} are $C^1$.
\end{remark}

At the end, we discuss the applications of our study to the surgery theory of Anosov flows. Surgery theory has been a very important part of the geometric theory of Anosov flows from the early days. Various Dehn-type surgery operations, including Handel-Thurston \cite{handel}, Goodman~\cite{goodman}, Fried~\cite{fried} or Foulon-Hasselblatt \cite{foulon} surgeries, have helped construction of new examples of Anosov flows, answering historically important questions. These include the first examples of Anosov flows on hyperbolic manifolds \cite{goodman}, the construction of infinitely many {\em contact} Anosov flows on hyperbolic manifolds \cite{foulon} or the first (non-trivial) classification of Anosov flows on hyperbolic manifolds \cite{binyu}.

Recently, Salmoiraghi \cite{salmoi,salmoi2} has introduced two novel bi-contact geometric surgery operations of (projectively) Anosov flows, which contribute towards the contact geometric theory of Anosov flows (see \cite{mitsumatsu,hoz3,bowden} for instance) and the related surgery theory, reconstructing previously known surgery operations of Foulon-Hasselblatt and Handel-Thurston. These surgeries are applied in the neighborhood of a {\em Legendrian-transverse knot}, i.e. a knot which is {\em Legendrian} (tangent) for one of the underlying contact structures in the {\em supporting bi-contact} and transverse for the other one (see Section~\ref{2.2}). One of these surgery operations is done by cutting the manifold along an annulus tangent to the flow and the other one is based on a transverse annulus. However, the relation to Goodman surgery, which is one of the most significant surgery operations on Anosov flows, and is applied in the neighborhood of a periodic orbit of such flow, relies on one condition. That requires being able to push a periodic orbit to a Legendrian-transverse knot. Salmoiraghi observes that this is possible for the unit tangent space of hyperbolic surfaces \cite{salmoi} and furthermore, shows that if such condition is satisfied, the Goodman surgery can be reconstructed, using the bi-contact surgery on a transverse annulus (in fact, he generalizes such operation to projectively Anosov flows) \cite{salmoi2}. We show that such condition can be satisfied for any Anosov flow, by choosing a norm which yields constant divergence on a given periodic orbit of the flow, giving an affirmative answer to the question posed in \cite{salmoi}. This takes us one step closer to a contact geometric surgery of Anosov flows, unifying the previously introduced operations (it is noteworthy that the equivalence of Fried and Goodman surgeries has been recently shown for transitive Anosov flows \cite{shannon}, which is conjecturally true in the general case, hence resulting in the use of the term Goodman-Fried surgery in the literature).

\begin{theorem}\label{surgeryintro}
Let $\phi^t$ be an Anosov flow. Given any periodic orbit $\gamma_0$, there exists a supporting bi-contact structure $(\xi_-,\xi_+=\ker{\alpha_+})$, such that we have $R_{\alpha_+}\subset \xi_-$ in a regular neighborhood of $\gamma_0$. Therefore, there exists an isotopy $\{\gamma_t\}_{t\in [0,1]}$, which is supported in an arbitrary small neighborhood of $\gamma_0$, and $\gamma_t$ is a  Legendrian-transverse knot for any $0<t\leq 1$.
\end{theorem}

%It is noteworthy that since the introduction of Goodman \cite{goodman} and Fried \cite{fried} surgeries in the early 1980s, it has been a folklore theorem that the two operations produce orbit equivalent flows, when applied to a periodic orbit of an Anosov flow and the term Fried-Goodman surgery has been commonly used in the literature. However, such claim has been formally proven only recently and in the category of transitive Anosov flows \cite{shannon}. In \cite{salmoi2}, Salmoiraghi claims that in an upcoming paper, he uses Theorem~\ref{surgeryintro} to prove this in the general case.

\begin{corollary}
The bi-contact surgeries of Salmoiraghi \cite{salmoi,salmoi2} can be applied in an arbitrary small neighborhood of a periodic orbit of any  Anosov flow. In particular, the bi-contact surgery of \cite{salmoi2} reconstructs the Goodman surgery.
\end{corollary}

\vskip0.25cm

In Section~\ref{2}, we review some basic notions in Anosov dynamics and the expansion rates, as well as their connection to contact geometry. In Section~\ref{3}, we describe the divergence of a (projectively) Anosov flow in terms of its associated expansion rates. In Section~\ref{4}, we discuss some useful interplays between the contact geometry of Anosov flows and various volume forms on a three manifold, giving a contact geometric characterization of Anosovity, based on divergence, In Section~\ref{5}, we study the symmetries that the existence of an invariant volume form implies on the geometry of an Anosov flow, from various viewpoints of the theory of contact hyperbolas, Reeb dynamics and Liouville geometry. Finally, Section~\ref{6} is devoted to discussing the applications of our study to bi-contact surgeries.

\vskip0.5cm
\textbf{ACKNOWLEDGEMENT:} I want to thank my advisor, John Etnyre, for constant support and encouragement. I am also very grateful to Meysam Nassiri, Federico Salmoiraghi, Domenico Perrone, Thomas Barthelmé, Masayuki Asaoka and Steven Hurder for helpful conversations. Finally, I owe a debt of gratitude to the reviewers for thoughtful examination and insightful suggestions. The author was partially funded by NSF grant DMS-1906414.

%%%%%%%%%%%%%%%%%%%%%%%%%%
%%%%%%%%%%%%%%%%
\section{Background}\label{2}

In this section, we bring the necessary background for the main results. First, we review some basics about Anosov flows in dimension 3 and their generalization to projectively Anosov flows. Then, we discuss the connection of such flows to contact geometry. This is not, by any means, a thorough treatment and one should consult references like \cite{hflows,bart} and \cite{geiges} on these subjects for a more complete perspective.

%%%%%%%%%%%%%%%%%%%%
\subsection{(Projectively) Anosov flows and the associated expansion rates}\label{2.1}

Anosov flows in dimension 3 are non-singular flows, whose action on the tangent space of the ambient manifold exhibits exponential expansion and contraction in two distinct transverse directions. More formally,

\begin{definition}\label{anosov}
Let $\phi^t$ be the flow, generated by the non-vanishing $C^1$ vector field $X$. We call $\phi^t$ {\em Anosov}, if there exists a continuous invariant splitting $TM\simeq E^{ss} \oplus E^{uu} \oplus \langle X\rangle$, such that for some positive constant $C$ and a norm $||.||$, we have
$$||\phi^t_*(u)||\leq e^{-Ct}||u||\ \ \ \text{and}\ \ \ ||\phi^t_*(v)||\geq e^{Ct}||v||,$$
for any $u\in E^{ss}$ and $v\in E^{uu}$. We call $E^{ss}$ and $E^{uu}$ strong stable and unstable directions, respectively.
\end{definition}

The classical examples of such flows are the geodesic flows on the unit tangent bundle of hyperbolic surfaces and the suspension of Anosov diffeomorphisms of torus. However, various surgery operations on Anosov flows have yielded many more examples, including infinitely many examples on hyperbolic manifolds. These include surgeries of Handel-Thurston \cite{handel}, Fried \cite{fried}, Goodman~\cite{goodman}, Foulon-Hassleblatt \cite{foulon} and more recently, the {\em bi-contact geometric} surgeries introduced by Salmoiraghi \cite{salmoi,salmoi2}, which manage to reproduce, up to orbit equivalence, the previous operations in many cases.

\begin{remark}\label{poincare}
We remark that the Anosovity of a non-singular flow can be determined by its action on the normal bundle of the direction of the flow. More precisely, a flow $\phi^t$, generated by the non-vanishing vector field $X$, induces a flow on $TM/\langle X \rangle$ via $\pi:TM\rightarrow TM/\langle X \rangle$, usually called {\em the induced Poincaré linear flow}. It is a classical result in dynamical systems by Doering \cite{doering} that a flow is Anosov, if and only if the induced Poincaré linear flow admits a {\em hyperbolic splitting}. That is, there exists a continuous splitting of the normal bundle $TM/\langle X\rangle\simeq E^s \oplus E^u$, which is invariant under the induced Poincaré linear flow and with respect to some norm, the action of such flow on $E^u$ and $E^s$ is exponentially expanding and contracting, respectively.
\end{remark}

One important generalization of Anosov flows, which bridges Anosov dynamics to contact geometry is the following:

\begin{definition}\label{pA}
Let $\tilde{\phi}^t$ be the Poincaré linear flow as in Remark~\ref{poincare}. We call $\phi^t$ {\em projectively Anosov}, if there exists a continuous invariant splitting $TM/\langle X\rangle\simeq E^{s} \oplus E^{u}$, such that for some positive constant $C$ and a norm $||.||$, we have
$$||\tilde{\phi}^t_*(v)||/||\tilde{\phi}^t_*(u)||\leq e^{Ct}||v||/||u||,$$
for any $u\in E^{s}$ and $v\in E^{u}$. We call $E^s$ and $E^u$ stable and unstable directions, respectively.
\end{definition}

In other words, a flow is projectively Anosov, if its induced Poincaré linear flow admits a {\em dominated splitting}. That is, a continuous and invariant splitting into two line bundles, on which the action of the flow is relatively expanding in one direction with respect to the other. 

\vskip0.25cm
\noindent\fbox{%
    \parbox{\textwidth}{%
Abusing notation, we also refer to $\pi^{-1}(E^s)$ and $\pi^{-1}(E^u)$, which are a priori $C^0$ two dimensional sub bundles of $TM$, by $E^s$ and $E^u$, respectively, and call them {\em the weak stable and unstable bundles}, respectively (see Remark~\ref{shifting}).
}}
\vskip0.25cm

In the case when a projectively Anosov flow is Anosov, the weak stable and unstable bundles are known to be $C^1$ \cite{regular}.
It is worth mentioning that that unlike the Anosov case (as discussed in Remark~\ref{poincare}), in the case of projectively Anosov flows, the splitting of the normal bundle $TM/\langle X\rangle$ cannot necessarily be lifted to the tangent space $TM$ \cite{noda}.

Although it is not a priori obvious if such class of dynamics is strictly larger the class of Anosov flows, we know that projectively Anosov flows are abundant. See \cite{mitsumatsu,confoliations} for examples on torus bundles, \cite{bowden} for non-Anosov examples on hyperbolic manifolds and \cite{adn} for a more general construction.

In order to build the bridge from the above definitions to the world of differential and contact geometry, it is very useful for us to measure the infinitesimal expansion or contraction of the length of vectors in the invariant bundles. Note that without loss of generality, we can assume the norm involved in the definition of (projectively) Anosov flows is $C^\infty$.
%Note that, after integrating the norm in the above definitions in the direction of the flow, i.e. replacing $||.||$ with $\frac{1}{T}\int_0^T \phi^{t*}||.||dt$, one can assume that the norm satisfying the (projective) Anosovity condition is {\em $X$-differentiable}, i.e. differentiable in the direction of the flow. 

\begin{definition}
Using the above notation and considering $TM/\langle X\rangle\simeq E^{s} \oplus E^{u}$, let $\tilde{e}_s\in E^s$ and $\tilde{e}_u\in E^u$ be the unit vectors with respect to some $X$-differentiable norm $||.||$ on $TM/\langle X\rangle$. We call
$$r_s:=\frac{\partial}{\partial t} \ln{||\tilde{\phi}_*^t (\tilde{e}_s)||}\bigg|_{t=0}\ \  \text{and} \ \   r_u:=\frac{\partial}{\partial t} \ln{||\tilde{\phi}_*^t (\tilde{e}_u)||}\bigg|_{t=0}$$ {\em the expansion rates of the stable and unstable directions}, respectively, with respect to $||.||$.
\end{definition}

We remark that similar notions have been previously utilized in the study of various aspects of Anosov flows \cite{simic,regular,sharp}.

\begin{remark}\label{shifting}
Note that the norm used in the definition of expansion rates is defined on the normal bundle $TM/\langle X\rangle$. However, it is easy to describe these quantities based on the tangent bundle $TM$. First, given a norm on $TM/\langle X\rangle$, consider some metric on this vector bundle, which induces the norm. Notice that given any transverse $C^1$ plane field $\eta$,  there exists a natural isomorphism $\eta\simeq TM/\langle X\rangle$ via the projection $\pi:TM\rightarrow \eta\simeq TM/\langle X\rangle$. Therefore, a Riemannian metric on $TM/\langle X\rangle$ induces a Riemannian metric on $\eta$, which can be naturally extended to $TM$, by letting $||X||=1$ and $X\perp \eta$, where $||.||$ is the norm on $TM$, induced from such metric. If $\tilde{e}_s \in E^s \subset TM/\langle X\rangle$ and $\tilde{e}_u\in E^u\subset TM/\langle X\rangle$ are the unit vector fields, their image $e_s\in \pi^{-1}(E^s)\cap \eta$ and $e_u\in \pi^{-1}(E^u)\cap \eta$, under the isomorphism are unit vector fields with respect to the norm induced on $TM$. It is easy to compute
$$\mathcal{L}_X e_s=-r_se_s +q_s X\ \ \ \text{and}\ \ \ \mathcal{L}_X e_u=-r_ue_u +q_u X,$$
where $q_s$ and $q_u$ are real functions, depending on our choice of $\eta$. Note that we will have $q_s=q_u=0$, when $\eta$ is preserved by $X$ (see Section~3 of \cite{hoz3} for more thorough discussion). Since we are assuming $\eta$ to be $C^1$, this only happens for an Anosov flow, when it is contact or a suspension flow~\cite{zyg}.

Note that this remark also justifies the abuse of notation we adopted in the beginning of this section, i.e. referring to $\pi^{-1}(E^s)$ by $E^s$ (and similarly for $E^u$). To be more clear, the correspondence between the metrics on $TM$ and $TM/\langle X\rangle$ discussed above allows us to talk about the unit vector $e_s\in E^s$ and therefore, the expansion rate of the stable direction unambiguously, whether we consider $E^s\subset TM$ or $E^s \subset TM/\langle X \rangle$. The same holds for $e_u\in E^u$ and the expansion rate of the unstable direction.
\end{remark}

\begin{remark}\label{formshifting}
Alternatively, one can characterize the expansion rates in terms of differential forms. Consider a metric on $TM/\langle X \rangle$ and define $\tilde{\alpha_s}$ as a differential form on $TM/\langle X \rangle$, by letting $\ker{\tilde{\alpha_s}}=E^u\subset TM/\langle X \rangle$ and $\tilde{\alpha_s}(\tilde{e}_s)=1$, where $\tilde{e}_s\in E^s\subset TM/\langle X \rangle$ is a unit vector field with respect to the chosen metric. Similarly, define $\tilde{\alpha}_u$ and easily compute
$$\mathcal{L}_X \tilde{\alpha}_s=r_s\tilde{\alpha}_s\ \ \ \text{and}\ \ \ \mathcal{L}_X \tilde{\alpha}_u=r_u\tilde{\alpha}_u,$$
where $r_s$ and $r_u$ are the expansion rates of the stable and unstable directions with respect to such metric.

As we will see in the remainder of the paper, it is usually desired to work with differential forms on $TM$. Therefore, we can define $\alpha_s:=\pi^*\tilde{\alpha}_s$ and $\alpha_u:=\pi^*\tilde{\alpha}_u$, where $\pi:TM\rightarrow TM/\langle X\rangle\simeq \eta$ is the natural projection, and note that
$$\mathcal{L}_X \alpha_s=r_s\alpha_s\ \ \ \text{and}\ \ \ \mathcal{L}_X \alpha_u=r_u\alpha_u.$$

In fact, the above expressions can be taken as the definition of the expansion rates $r_s$ and $r_u$. More precisely, if we take the unit vector fields $e_s\in E^s\cap\eta\subset TM$ and $e_u\in E^u\cap\eta\subset TM$, induced after choosing a transverse plane field $\eta$ as in Remark~\ref{shifting}, and define the differential form $\alpha_s$ by letting $\ker{\alpha_s}=E^u\oplus \langle X\rangle$ and $\alpha_s(e_s)=1$, this matches the definition above. The same holds for $\alpha_u$. This means that as in Remark~\ref{shifting}, the choice of the transverse plane field $\eta$ does not affect the geometry of expansion in terms of differential forms on $TM$.
\end{remark}

Not surprisingly, the (relative) exponential expansion for (projectively) Anosov flows can be easily characterized in terms of such expansion rates (see \cite{hoz3} for more details and proofs).

\begin{proposition}\label{rateca}
Let $X$ be a projectively Anosov vector field and $r_s$ and $r_u$, the expansion rates of stable and unstable directions, respectively, with respect to any Riemannian metric, satisfying the metric condition of Definition~\ref{pA}, which is $X$-differentiable, then 
$$r_u-r_s>0.$$
\end{proposition}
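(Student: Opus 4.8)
The plan is to reduce the claimed inequality to a pointwise differentiation of the defining metric condition at time $t=0$, the point being that $X$-differentiability of the norm is exactly what makes such a derivative available. First I would record the infinitesimal content of the growth rates. Writing $\tilde e_s,\tilde e_u$ for the unit vector fields spanning $E^s,E^u$, the very definition of $r_s,r_u$ together with the flow property of $\tilde\phi^t$ gives, for every $p\in M$,
\[
\ln\|\tilde\phi^T_*\tilde e_s(p)\|=\int_0^T r_s(\phi^t p)\,dt,\qquad \ln\|\tilde\phi^T_*\tilde e_u(p)\|=\int_0^T r_u(\phi^t p)\,dt,
\]
so that $t\mapsto \ln\|\tilde\phi^t_*\tilde e_s(p)\|$ and $t\mapsto \ln\|\tilde\phi^t_*\tilde e_u(p)\|$ are differentiable at $t=0$, with derivatives $r_s(p)$ and $r_u(p)$ respectively.

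Next I would feed the metric condition of Definition~\ref{pA} into this at the infinitesimal level. For unit vectors the condition reads, in its clean adapted form, $\|\tilde\phi^t_*\tilde e_u(p)\|/\|\tilde\phi^t_*\tilde e_s(p)\|\ge e^{Ct}$ for all $t\ge 0$, an inequality that degenerates to an equality at $t=0$. Setting $F(t):=\ln\|\tilde\phi^t_*\tilde e_u(p)\|-\ln\|\tilde\phi^t_*\tilde e_s(p)\|-Ct$, the previous paragraph makes $F$ differentiable at $0$ with $F'(0)=r_u(p)-r_s(p)-C$, while the domination inequality says $F(t)\ge 0=F(0)$ for $t\ge 0$. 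Hence $F'(0)\ge 0$, i.e. $r_u(p)-r_s(p)\ge C>0$ at every $p$, which is in fact slightly stronger than the stated conclusion since $C$ is uniform.

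The step that needs care, and which I expect to be the real obstacle, is the passage from the metric condition as given to this clean form, namely the fact that the defining inequality may be arranged to hold from $t=0$ with multiplicative constant equal to $1$, so that $F(0)=0$ and $F'(0)\ge 0$ can be legitimately read off. With only a dominated splitting carrying a multiplicative slack constant $C'\ge 1$ one has $F(0)=-\ln C'\le 0$ and the naive differentiation yields nothing pointwise; indeed, adding an $X$-coboundary to the rate (a rescaling of the norm on $E^s$ or $E^u$ by an $X$-differentiable factor) can render $r_u-r_s$ negative at a point while leaving the asymptotic domination untouched. This is exactly why the statement is phrased for an $X$-differentiable metric satisfying the condition of Definition~\ref{pA}: the averaging of the norm described in the remark preceding the definition of the growth rates (replacing $\|\cdot\|$ by $\int_0^T\phi^{t*}\|\cdot\|\,dt$) produces such an adapted, $X$-differentiable metric for which the inequality holds cleanly from $t=0$. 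I would therefore insert a short lemma checking that this averaged norm satisfies the hypotheses, after which the two-line differentiation above closes the argument; the only remaining bookkeeping is to track the orientation of the splitting so that the inequality points in the direction making $r_u-r_s$, rather than $r_s-r_u$, positive.
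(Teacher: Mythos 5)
Your argument is correct. Note that the paper does not actually prove Proposition~\ref{rateca} in the text --- it is stated with a pointer to \cite{hoz3} --- so there is no in-paper proof to compare against; but your differentiation-at-$t=0$ argument is the standard one and is complete as written. Since Definition~\ref{pA} is stated \emph{without} a multiplicative slack constant, plugging the unit vectors $\tilde e_s(p),\tilde e_u(p)$ into the domination inequality gives $F(t)\ge 0=F(0)$ for $t\ge 0$ directly, and the $X$-differentiability of the norm is exactly what makes $F'(0)=r_u(p)-r_s(p)-C$ exist; the conclusion $r_u-r_s\ge C>0$ follows, and you do not even need the integral cocycle formula, only differentiability at $0$. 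Your closing concern about passing from a slack-constant version of domination to the clean adapted form is well judged --- the coboundary example showing that $r_u-r_s$ can be made negative at a point for a non-adapted metric is exactly the right cautionary remark --- but for the proposition as stated no extra lemma is required: the hypothesis already hands you a metric satisfying the slack-free condition of Definition~\ref{pA}, so $F(0)=0$ is automatic, and the averaging $\int_0^T\phi^{t*}\|\cdot\|\,dt$ pertains to the \emph{existence} of such metrics (addressed in the paper just before the definition of the growth rates), not to the proof of this implication. One last bookkeeping item you already flag implicitly: the inequality in Definition~\ref{pA} is printed with the sense that does not express domination; your silent correction to $\|\tilde\phi^t_*\tilde e_u\|/\|\tilde\phi^t_*\tilde e_s\|\ge e^{Ct}$ for $t\ge 0$ is the intended reading, as the rest of the paper (e.g.\ Lemma~\ref{contcomp}) confirms.
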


While Proposition~\ref{rateca} expresses the {\em relative} expansion in the unstable direction with respect to the stable direction, the following proposition realizes when we have {\em absolute} expansion and contraction in those directions, which by Doering's result \cite{doering} yields Anosovity (see Remark~\ref{poincare}).

\begin{proposition}\label{rateanosov}
Let $X$ be a projectively Anosov vector field and $r_s$ and $r_u$. Then $X$ is Anosov, if and only if, with respect to some Riemannian metric, we have $$r_u>0>r_s.$$
\end{proposition}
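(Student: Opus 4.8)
The plan is to reduce the statement to Doering's criterion recalled in Remark~\ref{poincare}: the flow is Anosov exactly when the invariant splitting $TM/\langle X\rangle\simeq E^s\oplus E^u$ of the Poincar\'e linear flow is genuinely hyperbolic, i.e. uniformly contracting on $E^s$ and uniformly expanding on $E^u$. Since an Anosov flow is in particular projectively Anosov and the dominated splitting is unique when it exists, any hyperbolic splitting must coincide with the one defining $r_s$ and $r_u$, so throughout I may work with a single pair of line bundles $E^s,E^u$. The whole equivalence then becomes a comparison between the \emph{pointwise} infinitesimal rates $r_s,r_u$ and the \emph{integrated} (asymptotic) estimates appearing in the definition of hyperbolicity. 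The bridge is the elementary identity, valid by the definition of the growth rates together with invariance and orientability of the splitting,
\[
\|\tilde\phi_*^t \tilde e_s(x)\| = \exp\!\left(\int_0^t r_s(\phi^\tau x)\,d\tau\right),
\]
and its analogue for $\tilde e_u$; it follows from the cocycle relation $\tilde\phi_*^{t+s}=\tilde\phi_*^s\circ\tilde\phi_*^t$ after writing $\tilde\phi_*^t\tilde e_s(x)=\|\tilde\phi_*^t\tilde e_s(x)\|\,\tilde e_s(\phi^t x)$.

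For the direction $(\Leftarrow)$, suppose some $X$-differentiable Riemannian metric yields $r_u>0>r_s$ everywhere. Because $r_s,r_u$ are continuous (they are the coefficients in $\mathcal L_X e_s=-r_se_s+q_sX$ of Remark~\ref{shifting}, with $e_s,e_u$ continuous and $X$-differentiable) and $M$ is compact, there is $c>0$ with $r_s\le -c$ and $r_u\ge c$. Plugging these uniform bounds into the displayed identity gives $\|\tilde\phi_*^t\tilde e_s\|\le e^{-ct}$ and $\|\tilde\phi_*^t\tilde e_u\|\ge e^{ct}$, which is exactly a hyperbolic splitting for the Poincar\'e linear flow; Doering's theorem (Remark~\ref{poincare}) then yields that $X$ is Anosov.

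For the direction $(\Rightarrow)$, I would start from Doering: Anosovity gives a (possibly non $X$-differentiable) norm $\|\cdot\|_0$ and constants $C,\lambda>0$ with $\|\tilde\phi_*^t u\|_0\le Ce^{-\lambda t}\|u\|_0$ for $u\in E^s$ and $\|\tilde\phi_*^t v\|_0\ge C^{-1}e^{\lambda t}\|v\|_0$ for $v\in E^u$. The infinitesimal rate of $\|\cdot\|_0$ need not have the right sign pointwise, so I would repair this by the flow-averaging trick already flagged before the definition of the growth rates: set $\|v\|_T^2:=\int_0^T\|\tilde\phi_*^\tau v\|_0^2\,d\tau$ on each of $E^s,E^u$, and extend to a metric on $TM$ via Remark~\ref{shifting}. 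Using the cocycle property one computes $Q(\tilde\phi_*^t v)=\int_t^{t+T}\|\tilde\phi_*^s v\|_0^2\,ds$ with $Q:=\|\cdot\|_T^2$, whence
\[
2\,r_s(x)=\frac{\|\tilde\phi_*^T\tilde e_s(x)\|_0^2-\|\tilde e_s(x)\|_0^2}{\int_0^T\|\tilde\phi_*^\tau\tilde e_s(x)\|_0^2\,d\tau},
\]
and the symmetric formula for $r_u$. Choosing $T$ large enough that $Ce^{-\lambda T}<1<C^{-1}e^{\lambda T}$, the numerator is strictly negative on $E^s$ and strictly positive on $E^u$, giving $r_u>0>r_s$ for the averaged metric. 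The boundary-term computation also exhibits a continuous $t$-derivative, so $\|\cdot\|_T$ is $X$-differentiable, as required for the rates to be defined.

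The main obstacle is this last direction: upgrading the purely asymptotic hyperbolicity supplied by Doering to \emph{pointwise} sign control of the infinitesimal rates through a single, explicit $X$-differentiable metric. The delicate points are that the averaging window $T$ must be chosen uniformly (this is where compactness and the uniform Doering constants enter), and that one must verify both the $X$-differentiability of the averaged quadratic form and the continuity of its rates, the latter being exactly what makes the compactness argument in the converse applicable. The forward direction, by contrast, is essentially just compactness plus integration of the rate identity.
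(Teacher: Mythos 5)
Your argument is correct. Note that the paper does not actually prove Proposition~\ref{rateanosov}; it defers to \cite{hoz3}, so there is no in-text proof to compare against. What you write is the standard (and essentially the intended) argument: the identity $\|\tilde\phi_*^t\tilde e_s\|=\exp\bigl(\int_0^t r_s(\phi^\tau x)\,d\tau\bigr)$ reduces the forward implication to compactness plus integration, and the converse is handled by the Lyapunov-metric averaging $\|v\|_T^2=\int_0^T\|\tilde\phi_*^\tau v\|_0^2\,d\tau$, whose logarithmic derivative is the boundary term you display; choosing $T$ so that $Ce^{-\lambda T}<1$ forces the signs. Two small points worth making explicit if you write this up: (i) the uniqueness of the dominated splitting (so that the hyperbolic splitting from Doering coincides with the projectively Anosov one) deserves a sentence, since the proposition implicitly identifies the two pairs of bundles; and (ii) your compactness step in the $(\Leftarrow)$ direction needs $r_s,r_u$ to be continuous, which holds for the metrics the paper works with (and is implicitly used elsewhere, e.g.\ in the $\min_{x\in M}r_u$ and $\max_{x\in M}r_s$ of the proof of Theorem~\ref{contchar}) but is not automatic from $X$-differentiability alone, so it should be stated as a hypothesis on the metric or verified for the averaged metric you construct.
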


%%%%%%%%%%%%%%
\subsection{Relation to (bi-)contact geometry}\label{2.2}

Recall that a $C^1$ 1-form $\alpha$ is a {\em contact form} on $M$, if
$\alpha \wedge d\alpha$ is a non-vanishing volume form on $M$.
If $\alpha \wedge d\alpha>0$ (compared to the orientation on $M$), we call $\alpha$ a {\em positive} contact form and otherwise, a {\em negative} one. We call the $C^1$ plane field $\xi:=\ker{\alpha}$ a (positive or negative) {\em contact structure} on $M$. Notice that by the Frobenuis theorem, contact structures can be thought of as {\em maximally non-integrable} $C^1$ plane fields, i.e. the extreme opposite of foliations.

For example, $\xi_{std}:=\ker{dz-ydx}$ ($\xi_{std}:=\ker{dz+ydx}$) is called the {\em standard} positive (negative) contact structure on $\mathbb{R}^3$, while $\xi_n:=\ker{\{\cos{2\pi n z}dx-\sin{2\pi n} dy \}}$ on $\mathbb{T}^3=\mathbb{R}^3/\mathbb{Z}^3$ gives an infinite family of distinct positive (negative) contact structures, when $n\in\mathbb{Z}>0$ ($n<0$).

Although we do not go towards the topological aspects of contact structures in this paper, it is worth mentioning that positive (negative) contact structures do not have any local invariant, thanks to the Darboux theorem that states that any two positive (negative) contact structures are locally {\em contactomorphic} (i.e. locally look like the standard model on $\mathbb{R}^3$). Furthermore, Gray's theorem shows that any homotopy of a contact structure through contact structures is induced by an isotopy of the ambient manifold.

Associated to any contact structure, there is an important class of flows, which we will utilize in this paper. Given any contact form $\alpha$ for a contact structure $\xi:=\ker{\alpha}$, there exists a unique vector field $R_{\alpha}$, satisfying

$$d\alpha(R_{\alpha},.)=0\ \ \ \text{and}\ \ \ \alpha(R_\alpha)=1.$$

Such vector field is called a {\em Reeb vector field} and it is easy to check $\mathcal{L}_{R_\alpha}\alpha=0$. This implies $\mathcal{L}_{R_\alpha}\alpha \wedge d\alpha=0$. In particular Reeb vector fields are volume preserving. Furthermore, Reeb vector fields preserve $\xi$, and are transverse to the underlying contact structure. It is easy to observe that conversely, given a contact structure, any transverse vector field preserving the contact structure $\xi$ is a Reeb vector field for an appropriate choice of contact form.

A natural and well studied interplay of contact geometry and Anosov dynamics happens when a Reeb vector field is Anosov, i.e. the case of {\em contact Anosov flows} (see for instance \cite{foulon}). However, in this paper, we are interested in a more general relation between the two theories, thanks to the following proposition, first observed by Mitsumatsu \cite{mitsumatsu} and Eliashberg-Thurston \cite{confoliations}, which characterizes projectively Anosov flows in terms of contact geometry. We remind the reader that as mentioned in the introduction of the paper, we are assuming the underlying manifold to be oriented and the invariant bundles for the projectively Anosov flows to be transversely orientable.

\begin{proposition}\label{equiv}
Let $X$ be a non-vanishing $C^1$ vector field on $M$. Then, $X$ generates a projectively Anosov flow, if and only if, there exist positive and negative contact structures, $\xi_+$ and $\xi_-$ respectively, which are transverse and $X\subset \xi_+ \cap \xi_-$.
\end{proposition}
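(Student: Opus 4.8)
The plan is to prove both directions by translating between the metric/dynamical data of a projectively Anosov flow and a transverse pair of positive and negative contact structures, using the splitting $TM/\langle X\rangle \simeq E^s \oplus E^u$ and the plane fields $E^s, E^u \subset TM$ as the central objects.

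For the forward direction, I would start from a projectively Anosov flow with its invariant splitting and stable/unstable plane fields $E^s = \pi^{-1}(E^s)$ and $E^u = \pi^{-1}(E^u)$, each containing $\langle X\rangle$. The natural candidates for the contact structures are plane fields interpolating between $E^s$ and $E^u$; concretely, I would choose frames $e_s \in E^s \cap \eta$ and $e_u \in E^u \cap \eta$ for some transverse plane field $\eta$ (as in Remark~\ref{shifting}) and consider the plane fields $\xi_\pm = \langle X,\ e_s \pm e_u\rangle$, or more flexibly $\langle X, \cos\theta\, e_s + \sin\theta\, e_u\rangle$ for suitable angle functions. The key computation is to check the contact (nonintegrability) condition: using $\mathcal{L}_X e_s = -r_s e_s + q_s X$ and $\mathcal{L}_X e_u = -r_u e_u + q_u X$ from Remark~\ref{shifting}, one computes the Lie bracket of the spanning vector fields and shows that the bilinear form measuring nonintegrability has a definite sign determined by $r_u - r_s$. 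Here Proposition~\ref{rateca}, which gives $r_u - r_s > 0$, is exactly the input that forces nonintegrability and fixes the positive versus negative sign, so that $\xi_+$ becomes positive contact and $\xi_-$ negative contact. Since both plane fields contain $X$ by construction, transversality and $X \subset \xi_+ \cap \xi_-$ are immediate once the two plane fields are distinct.

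For the converse, I would assume transverse positive and negative contact structures $\xi_\pm$ with $X \subset \xi_+ \cap \xi_-$. The intersection $\xi_+ \cap \xi_-$ is a line field along $X$, and I would use the two planes to define candidate stable and unstable line fields in $TM/\langle X\rangle$: the images under $\pi$ of $\xi_+$ and $\xi_-$ give two distinct lines $E^s, E^u$ in the normal bundle (distinct by transversality). The remaining work is to verify the dominated splitting estimate of Definition~\ref{pA}. The opposite signs of $\alpha_+ \wedge d\alpha_+$ and $\alpha_- \wedge d\alpha_-$ translate, after choosing an $X$-adapted norm and integrating along the flow, into the relative expansion/contraction inequality $\|\tilde\phi^t_*(v)\|/\|\tilde\phi^t_*(u)\| \leq e^{Ct}\|v\|/\|u\|$; the contact conditions guarantee the invariance of the two lines is compatible with a strict domination after time-averaging the norm.

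The main obstacle I anticipate is the converse direction, specifically establishing the \emph{invariance} and the quantitative \emph{domination} of the splitting from the qualitative contact conditions. Nonintegrability of each $\xi_\pm$ gives a pointwise twisting condition on the bracket, but turning this into a uniform exponential-type estimate requires a careful choice of norm and an integration/averaging argument along the flow (as hinted in the discussion preceding the growth-rate definition, replacing $\|\cdot\|$ by $\int_0^T \phi^{t*}\|\cdot\|\,dt$). Controlling that the two contact planes yield genuinely invariant line fields in the Poincaré linear flow, rather than merely transverse ones, is the delicate point; the forward direction, by contrast, reduces to a clean sign computation powered by Proposition~\ref{rateca}.
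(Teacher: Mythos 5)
The paper does not actually prove Proposition~\ref{equiv}; it cites Mitsumatsu and Eliashberg--Thurston and records only the heuristic that the bisectors of $E^s$ and $E^u$ form the bi-contact structure, ``possibly after a perturbation to make them $C^1$.'' Your forward direction follows that heuristic, and the sign computation powered by $r_u-r_s>0$ (Proposition~\ref{rateca}) is the right engine. But note the regularity gap: $E^s$ and $E^u$, hence $e_s$, $e_u$ and the bisector plane fields $\langle X, e_s\pm e_u\rangle$, are in general only continuous and $X$-differentiable, so these are not yet contact structures in the usual $C^1$ sense. You must $C^0$-approximate them by $C^1$ plane fields while preserving the strict twisting condition --- exactly the approximation issue the paper handles carefully in the proof of Theorem~\ref{contchar}. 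This is fixable (the contact condition for a plane field containing $X$ reduces to the derivative along the flow, and $r_u-r_s>0$ is a strict inequality on a compact manifold, hence stable under small perturbation), but it cannot be omitted.

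The converse as you set it up would fail. You propose to take $E^s$ and $E^u$ to be the images $\pi(\xi_+)$ and $\pi(\xi_-)$ in $TM/\langle X\rangle$. These line fields are never invariant under the induced Poincar\'e linear flow: for a contact form $\alpha$ with $\alpha(X)=0$, the contact condition forces $\iota_X d\alpha$ to be nonzero on $\xi=\ker\alpha$, so $\mathcal{L}_X\alpha\neq 0$ on $\xi$ and $\pi(\xi)$ rotates strictly rather than being preserved. Indeed, this strict rotation is precisely the content of the contact condition, so your candidate splitting is invariant in no case. The correct mechanism is a cone-field argument: the two transverse lines $\pi(\xi_+)$ and $\pi(\xi_-)$ cut each fiber of $TM/\langle X\rangle$ into two pairs of opposite quadrants; positivity of $\xi_+$ and negativity of $\xi_-$ mean both boundary lines rotate strictly into the interior of one pair of quadrants under the forward flow and into the other pair under the backward flow; compactness gives a uniform rotation rate, and one obtains the invariant bundles as nested intersections $E^u=\bigcap_{t>0}\tilde{\phi}^t_*(\mathcal{C}^+)$ and $E^s=\bigcap_{t>0}\tilde{\phi}^{-t}_*(\mathcal{C}^-)$ of the forward and backward images of these cones, with the domination estimate following from the uniform rotation. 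You correctly flag invariance as the delicate point, but the construction you propose does not produce invariant line fields at all, and the time-averaging of the norm you mention cannot repair a non-invariant splitting.
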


In other words, considering a projectively Anosov flow, the bi-sectors of $E^s$ and $E^u$ can be seen to be a pair of positive and negative contact structures, possibly after a perturbation to make them $C^1$. And conversely, any vector field directing the intersection of such transverse pair is projectively Anosov.

We note that the above proposition also shows that the (periodic) orbits of a projectively Anosov flows are {\em Legendrian} (knots), i.e. tangent, for both of the underlying contact structures.

Using Proposition~\ref{equiv}, we can easily give examples of non-Anosov projectively Anosov flows. For instance, $(\xi_m:=\ker{\{dz+\epsilon (cos{2\pi m z}dx-\sin{2\pi m} dy)\}},\xi_n:=\ker{\{dz+\epsilon'(cos{2\pi n z}dx-\sin{2\pi n} dy) \}})$ is a pair of positive and negative transverse contact structures, whenever $m<0<n$ are integers and $\epsilon\neq \epsilon'$. Therefore, any flow, whose generating vector field lies in the intersection $\xi_m\cap \xi_n$, is a projectively Anosov flow on $\mathbb{T}^3=\mathbb{R}^3/\mathbb{Z}^3$ by Proposition~\ref{equiv}. Note that there are no Anosov flows on $\mathbb{T}^3$.

We call such pair of transverse negative and positive contact structures $(\xi_-,\xi_+)$ a {\em bi-contact structure}, {\em supporting} the underlying projectively Anosov flow. It turns out that by enriching a bi-contact structure with additional contact geometric structures, one can also characterize Anosov flows, purely in terms of contact geometry \cite{hoz3}.

%%%%%%%%%%%%%%%%%%%%%%%%%%%%%
\section{Divergence and the expansion rates}\label{3}

In this section, we show that the divergence of a projectively Anosov flow with respect to the (a priori $C^0$) volume form, which is induced from any norm satisfying the relevant definition, can naturally be characterized in terms of the expansion rates of the stable and unstable directions. We then give approximation results for volume forms with higher regularity.

\begin{theorem}\label{metric1}
Let $X$ be the generator of a projectively Anosov flow on $M$ and $\Omega$ be some volume form which is $X$-differentiable. There exists a metric on $M$, such that $div_X\Omega=r_s+r_u$, where $r_s$ and $r_u$ are the expansion rates of the stable and unstable directions, respectively, measured by the metric.
\end{theorem}

\begin{proof}
Choose a $C^\infty$ transverse plane field $\eta$ and let $\alpha_X$ be a $C^1$ 1-form such that $\alpha_X(\eta)=0$ and $\alpha_X(X)=1$. Furthermore, choose some contact form $\tilde{\alpha}_+$, so that $(\xi_-,\xi_+:=\ker{\tilde{\alpha}_+})$ is a supporting bi-contact structure for $X$, for some negative contact structure $\xi_-$.

We can write $\tilde{\alpha}_+=\tilde{\alpha}_u-\tilde{\alpha}_s$, where $\tilde{\alpha}_u\big|_{E^s}=\tilde{\alpha}_s\big|_{E^u}=0$. Notice that $\tilde{\alpha}_u$ and $\tilde{\alpha}_s$ are $C^0$ 1-forms, which are $X$-differentiable, since $\tilde{\alpha}_+$ is $C^1$ and the projection resulting in such decomposition is $X$-differentiable.

Since $\tilde{\alpha}_s \wedge \tilde{\alpha}_u \wedge \alpha_X$ is a volume form on $M$, there exists a positive function $f:M\rightarrow \mathbb{R}^{+}$, such that $|\Omega|=|\alpha_s \wedge \alpha_u \wedge \alpha_X|$, where $\alpha_s=f\tilde{\alpha}_s$ and $\alpha_u=f\tilde{\alpha}_u$.

Finally, we can define the norm $||.||$ with $||X||=||e_s||=||e_u||=1$, where $e_s\in E^s\cap\eta$, $e_u\in E^u\cap\eta$, $|\alpha_s(e_s)|=|\alpha_u(e_u)|=1$ and $(e_s,e_u,X)$ is a an oriented basis for $TM$. Notice that by construction, $||.||$ is $X$-differentiable.

Letting $r_s$ and $r_u$ be the expansion rates of the stable and unstable directions, respectively, we can compute

$$(\mathcal{L}_X\Omega)\  (e_s,e_u,X)=-\Omega([X,e_s],e_u,X)-\Omega(e_s,[X,e_u],X)=(r_s+r_u)\ \Omega(e_s,e_u,X),$$
completing the proof.
\end{proof}

\begin{corollary}
Any projectively Anosov flow preserving some $C^0$ volume form is Anosov. In particular, any contact projectively Anosov flow is Anosov.
\end{corollary}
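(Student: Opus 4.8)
The plan is to deduce the corollary directly from Theorem~\ref{metric1} together with Propositions~\ref{rateca} and~\ref{rateanosov}, so that the argument reduces to a short chain of implications rather than any new computation. First I would observe that the hypothesis provides a $C^0$ volume form $\Omega$ with $\mathcal{L}_X\Omega = 0$, i.e. $div_X\Omega = 0$ identically. The one subtlety to address at the outset is regularity: Theorem~\ref{metric1} is phrased for an $X$-differentiable volume form, and the corollary only assumes $\Omega$ is $C^0$. Since the divergence is computed via a Lie derivative in the $X$-direction, a preserved $C^0$ volume form is automatically constant along the flow and hence $X$-differentiable with vanishing $X$-derivative; I would record this remark so that Theorem~\ref{metric1} applies to $\Omega$. (Alternatively, one can invoke Theorem~\ref{metric2intro}(b) to approximate and pass to the limit, but the direct observation is cleaner.)

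Next I would apply Theorem~\ref{metric1} to this $\Omega$ to obtain a metric on $M$ whose associated stable and unstable growth rates $r_s, r_u$ satisfy
\[
r_s + r_u = div_X\Omega = 0.
\]
This is the heart of the argument: the preserved volume form forces the two growth rates to be exactly antisymmetric, $r_u = -r_s$, pointwise on $M$.

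I would then combine this with Proposition~\ref{rateca}, which guarantees $r_u - r_s > 0$ for any metric satisfying the projective Anosov condition. From $r_u = -r_s$ and $r_u - r_s > 0$ we get $2r_u > 0$ and $-2r_s > 0$, that is $r_u > 0 > r_s$ everywhere. By Proposition~\ref{rateanosov}, the existence of a metric with $r_u > 0 > r_s$ is exactly equivalent to $X$ being Anosov, which establishes the first assertion. For the second assertion, I would recall that a contact projectively Anosov flow preserves a transverse contact structure, meaning it preserves a contact form $\alpha$ (up to the standard normalization) and hence preserves the volume form $\alpha \wedge d\alpha$; this is a smooth, in particular $C^0$, preserved volume form, so the special case follows immediately from the general statement.

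I do not anticipate a serious obstacle here, since all the analytic work is already packaged in the cited results; the only point requiring care is the regularity bridge in the first step, ensuring that a merely $C^0$ preserved volume form legitimately feeds into Theorem~\ref{metric1}. If one wished to avoid even that remark, the fallback is to use part (b) of Theorem~\ref{metric2intro} to produce $C^1$ volume forms $\Omega^\epsilon$ with $|div_X\Omega^\epsilon - (r_s+r_u)| < \epsilon$ and argue that $r_s + r_u = 0$ in the limit, again yielding $r_u > 0 > r_s$ via Propositions~\ref{rateca} and~\ref{rateanosov}.
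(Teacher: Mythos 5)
Your proposal is correct and follows essentially the same route as the paper: observe that a preserved $C^0$ volume form is automatically $X$-differentiable with vanishing divergence, apply Theorem~\ref{metric1} to get $r_s+r_u=0$, and combine with Proposition~\ref{rateca} ($r_u-r_s>0$) to conclude $r_u>0>r_s$, hence Anosovity. The paper's proof is just a terser version of exactly this argument, so there is nothing further to compare.
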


\begin{proof}
Note that any preserved $C^0$ volume form is $X$-differentiable ($\mathcal{L}_X\Omega=0$). Therefore, Theorem~\ref{metric1} and Proposition~\ref{rateca} imply $r_s<0<r_u$, which guarantees Anosovity.
\end{proof}

In Theorem~\ref{metric1}, we show that given a volume form, we can find a norm on $TM$ such that the sum of its associated expansion rates equals the divergence of our volume form. The following theorem yields the inverse construction. That is, given a norm on $TM$, we can construct a volume form whose divergence is given by the sum of the expansion rates induced by our metric.

\begin{theorem}\label{metric2}
Let $X$ be the generator of a projectively Anosov flow and $||.||$ some $X$-differentiable norm on $TM$, induced by a metric. Also, let $r_s$ and $r_u$ be the expansion rates of the stable and unstable bundles, measured by $||.||$. Then,

 (a) there exists a volume form $\Omega$ on $M$, which is $X$-differentiable and $div_X\Omega=r_s+r_u$,

 (b) for any $\epsilon>0$, there exists a $C^1$ volume form $\Omega^\epsilon$, such that $|div_X\Omega^\epsilon-(r_s+r_u)|<\epsilon$.
\end{theorem}

\begin{proof}
(a) Choose a $C^\infty$ transverse plane field $\eta$. Via $\eta$, the norm involved in the definition of the expansion rates will induce a norm $||.||$ on $TM$ (see Remark~\ref{shifting}). Define $e_s\in E^s\cap\eta$ and $e_u\in E^u\cap\eta$, so that $||e_s||=||e_u||=1$ and $(e_s,e_u,X)$ is an oriented basis for $TM$. Finally, define the 1-forms $\alpha_s$, $\alpha_u$ and $\alpha_X$ so that $\alpha_s\big|_{E^u}=\alpha_u\big|_{E^s}=\alpha_X\big|_{\eta}=0$ and $\alpha_s(e_s)=\alpha_u(e_u)=\alpha_X(X)=1$. 

Letting $\Omega:=\alpha_s\wedge\alpha_u\wedge\alpha_X$, it is easy to see $div_X\Omega=r_s+r_u$, as in Theorem~\ref{metric1}.
\vskip0.25cm

(b) Let $\Omega$ be the volume form constructed in part (a) and $\Omega^\infty$ be any $C^\infty$ volume form on $M$. There exist a $X$-differentiable function $f:M\rightarrow \mathbb{R}^+$, such that $\Omega=f\Omega^\infty$. Notice that
$$\mathcal{L}_X\Omega=(X\cdot f)\Omega^\infty +f\mathcal{L}_X\Omega^\infty=(div_X\Omega)\Omega.$$

As in Lemma~4.3 in \cite{hoz3}, there exists a $C^1$ function $f^\epsilon$ such that $|f^\epsilon-f|$ and $|X\cdot f^\epsilon -X\cdot f|$ are arbitrary small. Therefore, letting $\Omega^\epsilon:=f^\epsilon \Omega^\infty$ and computing
$$\mathcal{L}_X\Omega^\epsilon=(X\cdot f^\epsilon)\Omega^\infty +f^\epsilon\mathcal{L}_X\Omega^\infty=(div_X\Omega^\epsilon)\Omega^\epsilon,$$
we confirm that $div_X\Omega^\epsilon$ can be taken to be arbitrary close to $div_X\Omega=r_s+r_u$.
\end{proof}

%%%%%%%%%%%%%%
\section{A contact geometric characterization of Anosovity\\ based on divergence}\label{4}

In this section, we show that we can use the volume forms, naturally coming from the underlying contact structures (see Section~\ref{2.2}), to give necessary and sufficient conditions for Anosovity of a projectively Anosov flow, which is independent of the metric and utilizes the expansion rates. %We note that to go from the natural setting of projectively Anosov flows, with a $C^0$ splitting which is a priori solely differentiable along the flow, to the contact geometric setting, which involves $C^1$ geometric objects, we need subtle approximation techniques. These techniques were also used by the author in \cite{hoz3}.

The following remark shows that a given contact form for one of the underlying contact structures of a projectively Anosov flow, induces a natural volume form, as well as a norm, with respect to which, we can compute the expansion rates.

\begin{remark}\label{inducedvol}
Notice that if $(\xi_-:=\ker{\alpha_-},\xi_+:=\ker{\alpha_+})$ is a supporting bi-contact structure for a projectively Anosov flow, $\alpha_+$ ($\alpha_-$) naturally defines two volume forms on $M$, one being the {\em contact volume form}, i.e. $\alpha_+\wedge d\alpha_+$ ($\alpha_-\wedge d\alpha_-$). Additionally, we can uniquely write $\alpha_+=\alpha_u-\alpha_s$ ($\alpha_-=\alpha_u+\alpha_s$), where $\alpha_u$ and $\alpha_s$ are continuous 1-forms, such that $\ker{\alpha_u}=E^s\subset TM$, $\ker{\alpha_s}=E^u\subset TM$, $\alpha_s(e_s)>0$ and $\alpha_u(e_u)>0$. Here, $e_s\in E^s\cap \eta$ and $e_u\in E^u\cap\eta$ for some $C^1$ transverse plane field $\eta$, such that $(e_s,e_u,X)$ is an oriented basis for $TM$. This induces the positive volume form $\Omega^{\alpha_+}:=\alpha_s\wedge\alpha_u \wedge \alpha_X$ ($\Omega^{\alpha_-}:=\alpha_s\wedge\alpha_u \wedge \alpha_X$), where $\alpha_X$ is any 1-form satisfying $\alpha_X(X)=1$.

Furthermore, $\alpha_+$ ($\alpha_-$) defines a norm on $TM/\langle X\rangle$, using the above argument and  the natural one-to-one correspondence between the differential forms on $TM/\langle X \rangle$ and the differential forms on $TM$ whose kernels include $X$.
%letting $||\tilde{e}_s||=||\tilde{e}_u||=1$, where $e_s\in E^s$ and $e_u\in E^u$ are vectors in $TM/\langle X\rangle$, satisfying $\pi^*\alpha_s(\tilde{e}_s)=\pi^*\alpha_u(\tilde{e}_u)=1$. With respect to such norm, we can measure the expansion rates of the underlying flow.

Notice that the definition of $\Omega^{\alpha_+}$ ($\Omega^{\alpha_-}$) above does not depend on the choice of $\eta$ (see Remark~\ref{formshifting}) and the oriented basis $(e_s,e_u,X)$. In particular, choosing an oriented basis of the form $(e_u,e_s,X)$ only changes our convention for splitting $\alpha_+$ ($\alpha_-$), i.e. we would need to write $\alpha_+=\alpha_u+\alpha_s$ ($\alpha_-=\alpha_u-\alpha_s$) in that case. However, these choices will not effect the constructed positive volume form $\Omega^{\alpha_+}$ ($\Omega^{\alpha_-}$).
\end{remark}

Here, we bring two lemmas, which will simplify the computations in the proof of Theorem~\ref{contchar}.

\begin{lemma}\label{contcomp}
Let $\alpha_+$ and $\alpha_-$ be positive and negative contact forms, such that $(\xi_-:=\ker{\alpha_-},\xi_+:=\ker{\alpha_+})$ is a supporting bi-contact structure for the projectively Anosov flow generated by $X$. Moreover, let $\Omega^{\alpha_+}$ ($\Omega^{\alpha_-}$) be the volume form, and $r_u^+$ and $r_s^+$ ($r_u^-$ and $r_s^-$) be the expansion rates, induced by $\alpha_+$ ($\alpha_-$) as in Remark~\ref{inducedvol}. Then,
$$\alpha_+\wedge d\alpha_+ =(r_u^+-r_s^+)\Omega^{\alpha_+} \ \bigg(\alpha_-\wedge d\alpha_-=-(r_u^--r_s^-)\Omega^{\alpha_-}\bigg).$$
\end{lemma}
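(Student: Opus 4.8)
The plan is to verify the identity pointwise, by evaluating both sides on the distinguished oriented frame $(e_s,e_u,X)$. Recall from Remark~\ref{inducedvol} that $\alpha_+=\alpha_u-\alpha_s$ with $\ker\alpha_u=E^s$, $\ker\alpha_s=E^u$, and that the unit vectors are normalized so that $\alpha_s(e_s)=\alpha_u(e_u)=1$. Choosing $\alpha_X$ to vanish on the transverse plane $\eta=\langle e_s,e_u\rangle$, the triple $(\alpha_s,\alpha_u,\alpha_X)$ becomes exactly the coframe dual to $(e_s,e_u,X)$, whence $\Omega^{\alpha_+}(e_s,e_u,X)=1$ (in fact the relevant determinant is $1$ for any $\alpha_X$ with $\alpha_X(X)=1$). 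First I would record the constant values $\alpha_+(e_s)=-1$, $\alpha_+(e_u)=1$, and $\alpha_+(X)=0$, the last because $X\subset\xi_+=\ker\alpha_+$.

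Next, expanding the wedge product on the frame gives
$$(\alpha_+\wedge d\alpha_+)(e_s,e_u,X)=\alpha_+(e_s)\,d\alpha_+(e_u,X)-\alpha_+(e_u)\,d\alpha_+(e_s,X)+\alpha_+(X)\,d\alpha_+(e_s,e_u),$$
so that only the two mixed terms $d\alpha_+(e_s,X)$ and $d\alpha_+(e_u,X)$ survive, the last term being annihilated by $\alpha_+(X)=0$. To compute these I would invoke the intrinsic formula $d\alpha_+(Y,Z)=Y(\alpha_+(Z))-Z(\alpha_+(Y))-\alpha_+([Y,Z])$. Since $\alpha_+(e_s)$, $\alpha_+(e_u)$, $\alpha_+(X)$ are all constants, every directional-derivative term vanishes and only the bracket term remains. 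Feeding in the relations $\mathcal{L}_Xe_s=-r_s^+e_s+q_sX$ and $\mathcal{L}_Xe_u=-r_u^+e_u+q_uX$ from Remark~\ref{shifting} (equivalently $[e_s,X]=r_s^+e_s-q_sX$ and $[e_u,X]=r_u^+e_u-q_uX$), and again using $\alpha_+(X)=0$ to discard the $q_s,q_u$ contributions, yields $d\alpha_+(e_s,X)=r_s^+$ and $d\alpha_+(e_u,X)=-r_u^+$. Substituting back gives $(\alpha_+\wedge d\alpha_+)(e_s,e_u,X)=r_u^+-r_s^+$, which is the desired identity since $\Omega^{\alpha_+}(e_s,e_u,X)=1$.

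I expect the only genuine subtlety — and hence the main point to argue carefully — to be the low regularity of the splitting: the frame $e_s,e_u$ and the forms $\alpha_s,\alpha_u$ are merely $C^0$ (though $X$-differentiable), so a naive use of the Cartan formula involving differentiation in the $e_s$ or $e_u$ directions is not licensed. The computation circumvents this precisely because the pairings $\alpha_+(e_s),\alpha_+(e_u),\alpha_+(X)$ are constants while $\alpha_+$ itself is $C^1$: the surviving terms require only the brackets $[e_s,X]$ and $[e_u,X]$, which are exactly the $X$-derivatives guaranteed by the $X$-differentiability of the frame, whereas the single term $d\alpha_+(e_s,e_u)$ that would demand transverse differentiation is killed by $\alpha_+(X)=0$. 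Finally, for the parenthetical statement I would run the identical computation with $\alpha_-=\alpha_u+\alpha_s$ (so now $\alpha_-(e_s)=\alpha_-(e_u)=1$ and $\alpha_-(X)=0$); the sign change in the $\alpha_s$-coefficient propagates to $(\alpha_-\wedge d\alpha_-)(e_s,e_u,X)=-(r_u^--r_s^-)$, giving the stated formula for the negative contact form.
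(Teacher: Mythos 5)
Your proof is correct and follows essentially the same route as the paper's: evaluating $\alpha_\pm\wedge d\alpha_\pm$ on the frame $(e_s,e_u,X)$, discarding the $d\alpha_\pm(e_s,e_u)$ term via $\alpha_\pm(X)=0$, and reducing the remaining terms to the brackets $[e_s,X]$, $[e_u,X]$ through the Cartan formula with constant pairings. Your explicit attention to why the low regularity of $e_s,e_u$ is harmless is a welcome elaboration of a point the paper leaves implicit, but it is not a different argument.
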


\begin{proof}
Let $e_s\in E^s$ and $e_u\in E^u$ be the unit vector fields on $TM$, defined as in Remark~\ref{shifting}.
$$\alpha_+\wedge d\alpha_+=\bigg\{\alpha_+(e_s)d\alpha_+(e_u,X)-\alpha_+(e_u)d\alpha_+(e_s,X)\bigg\}\Omega^{\alpha_+}$$
$$=\bigg\{-\alpha_+(e_s)\alpha_+([e_u,X])+\alpha_+(e_u)\alpha_+([e_s,X])\bigg\}\Omega^{\alpha_+}=(r^+_u-r^+_s)\Omega^{\alpha_+}.$$

Similar computation for $\alpha_-$ finishes the proof.
\end{proof}
Note that Theorem~\ref{metric1} also yields:
\begin{lemma}\label{volcomp}
With the notation of Lemma~\ref{contcomp},
$$\mathcal{L}_X\Omega^{\alpha_+}=(r_u^++r_s^+)\Omega^{\alpha_+} \ \bigg(\mathcal{L}_X\Omega^{\alpha_-}=(r_u^-+r_s^-)\Omega^{\alpha_-}\bigg).$$
In other words,
$$div_X\Omega^{\alpha_+}=r_u^++r_s^+ \ \bigg(div_X\Omega^{\alpha_-}=r_u^-+r_s^-\bigg).$$
\end{lemma}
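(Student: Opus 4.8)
The plan is to observe that Lemma~\ref{volcomp} is nothing more than a reinterpretation of the divergence computation already carried out in the proof of Theorem~\ref{metric1}, specialized to the particular volume form and norm that $\alpha_+$ (respectively $\alpha_-$) induces via Remark~\ref{inducedvol}. Concretely, I would first check that the data of Remark~\ref{inducedvol} coincide with the data used in Theorem~\ref{metric1}: the decomposition $\alpha_+=\alpha_u-\alpha_s$ singles out continuous, $X$-differentiable 1-forms with $\ker \alpha_u=E^s$ and $\ker \alpha_s=E^u$; the volume form is $\Omega^{\alpha_+}=\alpha_s\wedge\alpha_u\wedge\alpha_X$; and the induced norm is normalized so that the vectors $e_s\in E^s\cap\eta$ and $e_u\in E^u\cap\eta$ with $\alpha_s(e_s)=\alpha_u(e_u)=1$ are exactly the unit vectors measuring $r_s^+$ and $r_u^+$. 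This is precisely the configuration appearing at the end of the proof of Theorem~\ref{metric1}, so the growth rates computed there agree with $r_s^+$ and $r_u^+$.

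Granting this compatibility, I would reproduce the short Lie-derivative computation on the oriented frame $(e_s,e_u,X)$. Using $\mathcal{L}_X e_s=-r_s^+ e_s+q_s X$ and $\mathcal{L}_X e_u=-r_u^+ e_u+q_u X$ from Remark~\ref{shifting}, together with the fact that $\Omega^{\alpha_+}(e_s,e_u,X)=1$ is constant, the Leibniz expansion of $(\mathcal{L}_X\Omega^{\alpha_+})(e_s,e_u,X)$ has its $X\cdot(\cdots)$ term and its $[X,X]$ term vanish, while the two remaining terms contribute $r_s^+$ and $r_u^+$ (the $q_s,q_u$ pieces die because they insert a repeated $X$ into $\Omega^{\alpha_+}$). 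This yields $(\mathcal{L}_X\Omega^{\alpha_+})(e_s,e_u,X)=r_s^++r_u^+$, and since the frame evaluates to $1$ on $\Omega^{\alpha_+}$, we conclude $\mathcal{L}_X\Omega^{\alpha_+}=(r_s^++r_u^+)\Omega^{\alpha_+}$, i.e. $div_X\Omega^{\alpha_+}=r_s^++r_u^+$. The identical computation with the decomposition $\alpha_-=\alpha_u+\alpha_s$ and its induced norm gives $div_X\Omega^{\alpha_-}=r_s^-+r_u^-$.

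The only point requiring care — and the one I would state explicitly — is the bookkeeping that the norm of Remark~\ref{inducedvol} is exactly the norm for which Theorem~\ref{metric1}'s construction produces $\Omega^{\alpha_+}$; once that identification is recorded, the conclusion is immediate. There is no genuine analytic obstacle here, since everything takes place at the level of the $X$-differentiable (a priori $C^0$) objects already handled in Theorem~\ref{metric1}, and no approximation is needed. Accordingly, I would simply note that the lemma is the divergence formula of Theorem~\ref{metric1} applied to the contact-induced volume form, which is precisely why it is flagged as a consequence of that theorem.
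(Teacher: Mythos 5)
Your proposal is correct and matches the paper, which offers no separate argument for this lemma beyond the remark ``Note that Theorem~\ref{metric1} also yields,'' i.e.\ exactly the observation that the divergence computation of Theorem~\ref{metric1} applies verbatim to the volume form and norm induced by $\alpha_\pm$ as in Remark~\ref{inducedvol}. Your explicit bookkeeping of that identification, and the reproduced Lie-derivative computation on the frame $(e_s,e_u,X)$, is precisely the intended (and omitted) proof.
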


In the following, the flow being $C^1$ suffices. However, the proof in that generality would require subtle approximation techniques of $\cite{hoz3}$, since we cannot assume $C^1$-regularity of the weak stable and unstable bundles, when the derivative of the flow is not Hölder continuous. For the sake of simplicity, we assume the flow to be $C^{1+}$.

\begin{theorem}\label{contchar}
Let $X$ be the generating vector field for a projectively Anosov flow. Then, the followings are equivalent:

(1) $X$ is Anosov.

(2) There exists a positive contact form $\alpha_+$, such that for some $\xi_-$, the pair $(\xi_-,\xi_+:=\ker{\alpha_+})$ is a supporting bi-contact structure and $-\alpha_+ \wedge d\alpha_+<(div_X\Omega^{\alpha_+})\Omega^{\alpha_+} < \alpha_+\wedge d \alpha_+.$

(3) There exists a negative contact form $\alpha_-$, such that for some $\xi_+$, the pair $(\xi_-:=\ker{\alpha_-},\xi_+)$ is a supporting bi-contact structure and $\alpha_- \wedge d\alpha_-<(div_X\Omega^{\alpha_-})\Omega^{\alpha_-} < -\alpha_-\wedge d \alpha_-.$
\end{theorem}

\begin{proof}
We prove the equivalence of (1) and (2). Showing the equivalence of (1) and (3) is similar.

Assume (2) and let $r^+_u$ and $r^+_s$ be the associated expansion rates, for some projectively Anosov flow supported by $(\xi_-,\xi_+)$, induced by $\alpha_+$ as in Remark~\ref{inducedvol}. Using Lemma~\ref{contcomp} and Lemma~\ref{volcomp}, we can translate the condition on $\alpha_+$ to
$$r^+_s-r^+_u<r^+_s+r^+_u<r^+_u-r^+_s.$$
This yields $r^+_s<0$ and $r^+_u>0$, implying the Anosovity of $X$.

Now, we prove the other implication, utilizing a similar idea as above. Without loss of generality, we assume the norm satisfying the Anosovity condition $r_s<0<r_u$ to be $C^1$.

Define the 1-forms $\alpha_u$ and $\alpha_s$ by letting $\alpha_u\big|_{E^s}=\alpha_s\big|_{E^u}=0$ and $\alpha_u(e_u)=\alpha_s(e_s)=1$, where $e_s\in E^s\cap \eta\subset TM$ and $e_u\in E^u \cap \eta \subset TM$ are unit vector fields (see Remark~\ref{shifting} and \ref{formshifting} and notice that any choice of $\eta$ induces an appropriate metric on $TM$, with respect to which, the expansion rates are as desired) and $(e_s,e_u,X)$ is an oriented basis. Therefore, the expansion rates induced by the $C^1$ positive contact form $\alpha_+:=\alpha_u-\alpha_s$ (as in Remark~\ref{inducedvol}) are the same as $r_s$ and $r_u$, and therefore satisfying $r_s<0<r_u$, or equivalently, $r_s-r_u<r_s+r_u<r_u-r_s$. Lemma~\ref{contcomp} and~\ref{volcomp} yield (2) (It is noteworthy that the above construction of $\alpha_+$ does not depend on the choice of the oriented basis $(e_s,e_u,X)$, i.e. if we use an oriented basis of the form $(e_u,e_s,X)$, we would need to define $\alpha_+=\alpha_u+\alpha_s$).

\end{proof}

%%%%%%%%%%%%%%%
\section{Invariant volume forms, $(-1)$-Cartan structures and \\ Liouville reparametrization}\label{5}

In this section, we study the symmetries that the existence of an invariant volume form implies on the geometry of a volume preserving Anosov flow. This gives us various characterizations of an Anosov flow being volume preserving, in terms of the theory of contact hyperbolas, the Reeb dynamics of the supporting contact structures, and Liouville geometry.

In what follows, by a {\em volume preserving} Anosov flow, we mean one which preserves a continuous volume form. However, it is known that for a $C^k$ flow, the $C^k$ version of Livshitz regularity theorem implies any such continuous volume form to be $C^k$ \cite{hflows,livreg,llave}.

We first note that $E^s$ and $E^u$ are $C^1$ plane fields, when $X$ is a $C^{1+}$ Anosov flow (see \cite{hflows,hruder,regular}). This is an important fact in what follows, since it helps us preserve the metric symmetries of a volume preserving Anosov flow, when translating to the framework of contact geometry (for Anosov flows of lower regularity, one would require the approximation techniques of \cite{hoz3}, which do not automatically respect such symmetry).

Let $e_u\in E^u\subset TM$ be a unit vector field with respect to a metric, satisfying the Anosovity condition and let $\alpha_u$ be a 1-form, such that $\alpha_u\big|_{E^s}=0$ and $\alpha_u(e_u)=1$. Possibly after a perturbation, we can assume $\alpha_u$ to be $C^1$ and we have $r_u:=\alpha_u([e_u,X])>0$. That is, the induced expansion rate of the unstable direction is positive (see Remark~\ref{shifting} and \ref{formshifting}).

Let $\Omega$ be a continuous (and therefore, $C^{1}$) positive volume form which is invariant under the flow, and define $\alpha_s:=\Omega(.,e_u,X)$. Note that $\alpha_s$ is a $C^1$ 1-form, whose kernel is $E^s$. Since $div_X\Omega=0$, by Theorem~\ref{metric1} we have $r_s:=\alpha_s([e_s,X])=-r_u<0$.

Now, define $\alpha_+:=\alpha_u-\alpha_s$ and notice that $\alpha_+$ is a positive contact form, since its induced expansion rates satisfy $r_u-r_s=2r_u=-2r_s>0$. Similarly, define the negative contact structure $\alpha_-=\alpha_u+\alpha_s$. Therefore, for any volume preserving Anosov flow, we have a supporting bi-contact structure $(\xi_-:=\ker{\alpha_-}=\ker{\alpha_u+\alpha_s},\xi_+:=\ker{\alpha_+}=\ker{\alpha_u-\alpha_s})$, which captures the symmetry of an invariant volume form.

Furthermore, by solving $d\alpha_+(R_{\alpha_+},X)=d\alpha_-(R_{\alpha_-},X)=0$ and , one can easily show that
$$R_{\alpha_+}\subset \langle e_u-e_s,X \rangle=\xi_- \text{ \ \ and \ \ } R_{\alpha_-}\subset \langle e_u+e_s,X \rangle=\xi_-.$$

%%%%%%%

%%%%%%%%%%%%%%%%%%%%
\subsection{Taut contact hyperbolas and a Reeb dynamical characterization}\label{5.2}

As it can be seen in the discussion above, additional geometric symmetry can be observed in the case of volume preserving Anosov flows. In this section, we describe this extra structure, in terms of the theory of {\em contact hyperbolas}, developed by Perrone~\cite{perrone}, following the similar theory of {\em contact circles} by Geiges-Gonzalo \cite{circle2,circle}.

A {\em contact hyperbola} on $M$ is a pair of positive and negative contact forms $(\alpha_1,\alpha_2)$, such that $\alpha_a:=a_1\alpha_1+a_2\alpha_2$ is also a contact form, for any $a:=(a_1,a_2)\in \mathbb{H}_r^1$, where $H_r^1=\{(a_1,a_2)|a_1^2-a_2^2=r\}$ for $r\in \{-1,1\}$. Furthermore, a contact hyperbola is called {\em taut}, if $\alpha_a\wedge d\alpha_a=r\alpha_1\wedge d\alpha_1$ (or equivalently, $\alpha_a\wedge d\alpha_a=-r\alpha_2\wedge d\alpha_2$), for any $a\in \mathbb{H}_r^1$. It is easy to show \cite{perrone} that $(\alpha_1,\alpha_2)$ is a taut contact hyperbola, if and only if,
$$\alpha_1\wedge d \alpha_1=-\alpha_2\wedge d\alpha_2\ \ \ \text{and}\ \ \ \alpha_1\wedge d \alpha_2=-\alpha_2\wedge d\alpha_1.$$

Notice that if $(\alpha_1,\alpha_2)$ is a taut contact hyperbola, then $\ker{\alpha_1}$ and $\ker{\alpha_2}$ form a bi-contact structure if transverse, with any flow directing the intersection of them being projectively Anosov. It is known that the converse is not true, i.e. there are projectively Anosov flows which do not come from a contact hyperbola.

As it is seen above, for a volume preserving Anosov flow, the supporting bi-contact structure $(\ker{ \alpha_-}=\ker{\{\alpha_u+\alpha_s\}},\ker{\alpha_+}:=\ker{\{ \alpha_u-\alpha_s\} })$ exists, where $\alpha_u\big|_{E^s}=\alpha_s\big|_{E^u}=0$, and the induced positive volume forms and the expansion rates satisfy $\Omega^{\alpha_-}=\Omega^{\alpha_+}=\alpha_s\wedge\alpha_u\wedge\alpha_X$ (for any 1-form $\alpha_X$ with $\alpha_X(X)=1$) and $r_s^+=r_s^-=-r_u^+=-r_u^-$, respectively (see Remark~\ref{shifting} and \ref{formshifting}). By Lemma~\ref{contcomp}, we have
$$\alpha_+\wedge d\alpha_+=(r^+_u-r^+_s)\Omega^{\alpha_+}=(r^-_u-r^-_s)\Omega^{\alpha_-}=-\alpha_-\wedge d \alpha_-.$$

Moreover, the discussion in the beginning remarks of this section shows that $R_{\alpha_+}\subset\xi_-$ and $R_{\alpha_-}\subset \xi_+$, yielding
$$\alpha_+\wedge d\alpha_-=-\alpha_-\wedge d\alpha_+=0.$$

Therefore, $(\alpha_-,\alpha_+)$ is a taut contact hyperbola in this case. In fact, it satisfies the stronger condition of $\alpha_+\wedge d\alpha_-=-\alpha_-\wedge d\alpha_+=0$. A taut contact hyperbola with this property is called a {\em $(-1)$-Cartan structure \cite{perrone}}. It turns out that, not only this can be improved to a geometric characterization of volume preserving flows, it will also give us a characterization, purely in terms of the underlying Reeb flows.

\begin{theorem}\label{cartan}
Let $\phi^t$ be a projectively Anosov flow on $M$. Then, the followings are equivalent:

(1) The flow $\phi^t$ is a volume preserving Anosov flow.

(2) There exists a supporting bi-contact structure $(\xi_-,\xi_+)$ and contact forms $\alpha_-$ and $\alpha_+$ for $\xi_-$ and $\xi_+$, respectively, such that $(\alpha_-,\alpha_+)$ is a $(-1)$-Cartan structure.

(3) There exists a supporting bi-contact structure $(\xi_-,\xi_+)$ and Reeb vector fields $R_{\alpha_-}$ and $R_{\alpha_+}$ for $\xi_-$ and $\xi_+$, respectively, such that $R_{\alpha_-}\subset \xi_+$ and $R_{\alpha_+}\subset \xi_-$.
\end{theorem}

\begin{proof}
The above discussion shows that (1) implies (2). We can also conclude (3) from (2), by noticing that $\alpha_-\wedge d\alpha_+=-\alpha_+\wedge d\alpha_-=0$ yields $R_{\alpha_-}\subset \xi_+$ and $R_{\alpha_+}\subset \xi_-$. Therefore, the only remaining part is to show that a projectively Anosov flow is volume preserving, with the assumptions of (3).

We first note that any projectively Anosov flow with $R_{\alpha_-}\subset \xi_+$ and $R_{\alpha_+}\subset \xi_-$ is Anosov, thanks to Theorem~6.3 of \cite{hoz3}. Let $\alpha_-$ and $\alpha_+$ be the contact forms in (3). As in Remark~\ref{inducedvol}, we consider the expansion rates $r_s$ and $r_u$, induced from the decomposition $\alpha_+=\alpha_u-\alpha_s$. Notice that we can write $\alpha_-=f\alpha_u+g\alpha_s$, for positive $X$-differentiable functions $f,g>0$. One can easily check by solving $d\alpha_+(R_{\alpha_+},X)=0$ and using the fact that $\alpha_+(R_{\alpha_+})=1$ (or as it is done in Section~6 of \cite{hoz3}) that we can write ($q_+$ being a real function)
\begin{equation}\label{eq1}
R_{\alpha_+}=\frac{1}{r_u-r_s} (-r_se_u-r_ue_s)+q_+ X.
\end{equation}
and from $\alpha_-(R_{\alpha_+})=0$, we get 
\begin{equation}\label{eq2}
g=-\frac{fr_s}{r_u}.
\end{equation}

Also, using $\alpha_+(R_{\alpha-})=0$ and $\alpha_-(R_{\alpha_-})=1$, we can write ($q_-$ being a real function)
\begin{equation}\label{eq3}
R_{\alpha_-}=\frac{1}{f+g}(e_u+e_s)+q_- X.
\end{equation}

On the other hand, we have 
$$0=d\alpha_-(R_{\alpha_-},X)=d\alpha_-(e_s+e_u,X)=-X\cdot\alpha_-(e_s,e_u)-\alpha_-([e_s+e_u,X])$$

\begin{equation}\label{eq5}
\Rightarrow X\cdot f+X\cdot g+gr_s+fr_u=0
\end{equation}

Using Equation~\ref{eq2} in the above, we get
$$X\cdot f+X\cdot g+(f+g)(r_s+r_u)=0,$$
which yields
\begin{equation}\label{eq6} r_s+r_u=-X\cdot (f+g).\end{equation}

Finally, in order to show that the flow is volume preserving, it suffices to define $\Omega:=e^{f+g}\Omega^{\alpha_+}$ and use Lemma~\ref{contcomp} Equation~\ref{eq6} and to compute
$$div_X\Omega=X\cdot (f+g)e^{f+g}\Omega^{\alpha_+} + e^{f+g} (div_X\Omega^{\alpha_+})\Omega^{\alpha_+}$$
$$=e^{f+g}\big(X\cdot (f+g)+r_s+r_u\big)\Omega^{\alpha_+}=0$$

\end{proof}

\begin{remark}\label{reebreg}
It is noteworthy that the above proof shows that in fact, for a volume preserving Anosov flow, we have a function worth of pairs of Reeb vector fields satisfying (3) of Theorem~\ref{cartan}. This is useful in particular, when we require higher regularity of the underlying contact geometry. More precisely, the contact forms in Theorem~\ref{cartan} (2) are, except in the case of algebraic Anosov flows, only $C^1$. Therefore, their Reeb vector fields can be only assumed to be $C^0$ in general (this is due to the result of Ghys \cite{ghys}, which asserts that except in the case of algebraic Anosov flows, the weak stable and unstable bundles cannot be $C^2$). However, if we let go of the {\em symmetry} of the contact forms in Theorem~\ref{cartan} (2), we can achieve higher regularity of the underlying contact geometry in the following sense.

Let $\phi^t$ be a volume preserving Anosov flow and $\alpha_+$ a $C^\infty$ contact form, which $C^1$-approximates $\alpha_u-\alpha_s$ and $\alpha_+(X)=0$ ($\alpha_u$ and $\alpha_s$ are as in the above theorem). Note that $R_{\alpha_+}$ is $C^\infty$ and $\xi_-:=\langle R_{\alpha_+},X \rangle$ is a $C^\infty$ negative contact structure, if the $C^1$-approximation of $\alpha_u-\alpha_s$ is small enough. Now, a negative contact form for $\xi_-$ is of the form $\alpha_-:=f\alpha_u+g\alpha_s$, and in order to have $R_{\alpha_-}\subset \xi_+:=\ker{\alpha_+}$, it suffices to choose any $f$ and $g$ satisfying Equation~\ref{eq2}, where $r_s$ and $r_u$ are the expansion rates associated with $\alpha_+$ (as in Remark~\ref{inducedvol}). We note that although $\xi_-$ is $C^\infty$, the contact form $\alpha_-$ can only be assumed to be $C^1$ in general.
\end{remark}

\begin{corollaryproof}
In Theorem~\ref{cartan} (3), the supporting bi-contact structure $(\xi_-,\xi_+)$, as well as at least one of $R_{\alpha_+}$ or $R_{\alpha_-}$ can be chosen to be $C^\infty$.
\end{corollaryproof}

Theorem~\ref{cartan} gives examples of $(-1)$-Cartan structures and taut contact hyperbolas, whenever we have volume preserving flows, including the case of algebraic Anosov flows and more interestingly, we get new examples on hyperbolic manifolds, using the examples of contact Anosov flows on those manifolds~\cite{foulon}. The construction also yields new examples of $(-1)$-Cartan structures on any manifold supporting a transitive Anosov flow, many of which are not contact \cite{bbu,asaokaverj}.

\begin{corollary}
There exist infinitely many hyperbolic manifolds which admit a $(-1)$-Cartan structure (and in particular, a taut contact hyperbola).
\end{corollary}

%%%%%%%%%%%%%%%%%%%%%
\subsection{From the viewpoint of Liouville geometry}\label{5.3}

In \cite{hoz3}, we have shown how from an Anosov flow $X$ on a 3-manifold $M$, we can construct two {\em Liouville pairs}, $(\alpha_-,\alpha_+)$ and $(-\alpha_-,\alpha_+)$, where $(\xi_-:=\ker{\alpha_-},\xi_+:=\ker{\alpha_+})$ is a supporting bi-contact structure for $X$. That is, $\omega_1:=d\alpha_1$ and $\omega_2:=d\alpha_2$ are exact symplectic structures on $[-1,1]_t\times M$, where $\alpha_1:=(1-t)\alpha_-+(1+t)\alpha_+$ and $\alpha_2:=(1-t)\alpha_--(1+t)\alpha_+$.

Recall that $(W,d\alpha)$ is an {\em exact symplectic 4-manifold}, if $W$ is an oriented 4-manifold (with boundary) and $d\alpha$ is an exact symplectic structure on $W$, i.e. $d\alpha\wedge d\alpha>0$. For any exact symplectic manifold $(W,d\alpha)$, there exists a unique vector field $Y$, such that $\iota_Y d\alpha=\alpha$, or equivalently $\mathcal{L}_Y d\alpha=d\alpha$. Such vector field is call a {\em Liouville vector field}, if it points in the outward direction on $\partial W$, and the pair $(W,Y)$ is called a {\em Liouville structure}. We note that this is the case for an exact symplectic manifold constructed from an Anosov 3-flows above, since it is a {\em symplectic filling} for the contact manifold $(M,\xi_+)\cup (-M,\xi_-)$.

The relation between the associated Liouville vector field and the underlying Anosov vector field is more subtle in the general case of $C^1$ Anosov flows. But for $C^{1+}$ volume preserving Anosov flows, such connection becomes very straightforward, thanks to the symmetries implied by the existence of an invariant volume form and the fact that in this case, the weak stable and unstable bundles are $C^1$ \cite{hruder,regular}.

In what follows, we let $(\alpha_-=\alpha_u+\alpha_s,\alpha_+=\alpha_u-\alpha_s)$ be the $(-1)$-Cartan structure of Theorem~\ref{cartan} (in particular, $\Omega^{\alpha_+}=\alpha_s\wedge\alpha_u\wedge \alpha_X$ is a positive volume form for any 1-form $\alpha_X$ with $\alpha_X(X)=1$). We have the 1-form $\alpha_1=(1-t)\alpha_-+(1+t)\alpha_+=2\alpha_u-2t\alpha_s$ on $ [-1,1]_t\times M$, and compute
$$d\alpha_1\wedge d\alpha_1=\big\{ 2d\alpha_u-2dt\wedge \alpha_s-2td\alpha_s \big\} \wedge \big\{ 2d\alpha_u-2dt\wedge \alpha_s-2td\alpha_s \big\}$$
$$=-4dt\wedge \alpha_s\wedge d\alpha_u=4r_udt\wedge \Omega^{\alpha_+},$$
implying that $( [-1,1]_t\times M,d\alpha_1)$ is an exact symplectic manifold.  Similarly, we can show $d\alpha_2\wedge d\alpha_2=-4r_sdt\wedge \Omega^{\alpha_-}$, yielding another exact symplectic structure on $[-1,1]_t\times M$. Notice that  with the above assumptions, we have $r_u=-r_s>0$ and $\Omega^{\alpha_+}=\Omega^{\alpha_-}$. But the fact that the weak stable and unstable bundles are $C^1$ in this case, thanks to the Hölder continuity of the derivatives of the flow, plays a crucial role in preserving the symmetry, when going from a metric description of the underlying Anosov flow to a contact geometric one, and hence, significantly simplifying the construction of the above Liouville pairs, compared to Anosov flows of lower regularity studied in~\cite{hoz3}. We also remark that, unless $X$ is an algebraic Anosov flow \cite{ghys}, the $(-1)$-Cartan structure is a priori only $C^1$, and therefore, the 2-forms $d\alpha_1$ and $d\alpha_2$ above are exact symplectic structures, only in the $C^0$ sense. 

Now, if we define the vector field $Y_1:=\frac{1}{r_u}X+2t\partial_t$, we can compute

$$\iota_{Y_1} d\alpha_1=\frac{2}{r_u} \iota_X d\alpha_u -4t\alpha_s-\frac{2t}{r_u} \iota_Xd\alpha_s=2\alpha_u-4t\alpha_s+2t\alpha_s=\alpha_1.$$

Therefore, $Y_1$ is the Liouville vector field for $([-1,1]_t\times M,d\alpha_1)$. Notice that a similar computation helps us compute the Liouville vector field of $([-1,1]_t\times M,d\alpha_2)$. It can be seen \cite{simic} that the 1-forms $\alpha_u$ and $\alpha_s$ can be chosen such that $r_u$ and $r_s$ are $C^1$. In that case, it is noteworthy and surprising that although the constructed symplectic structures above are a priori only $C^0$, their corresponding Liouville vector fields are $C^1$.

Now, we can consider the vector field $X_L:=\frac{1}{r_u}X$, which generates a reparametrization of the original flow. Its associated expansion rates are $r'_u=\frac{r_u}{r_u}=1$ and $r'_s=\frac{r_s}{r_u}=-1$, respectively (see Remark~3.18 of \cite{hoz3}) and we have $Y_1\big|_{\{ 0\}\times M}=X_L$. We call such $X_L$ {\em the Liouville reparametrization of a volume preserving Anosov 3-flow} (in the sense of \cite{simic}, this is the {\em synchronization} of the flow with respect to both stable and unstable directions, simultaneously). 

The following theorem proves that the Liouville reparametrization of a volume preserving Anosov flow has even a closer relation to the underlying Reeb dynamics of Theorem~\ref{cartan}

\begin{theorem}\label{liou}
Let $X$ be the generating vector field of a  volume preserving Anosov flow. If $X_L$ is the generating vector field for the Liouville reparametrization of the flow, the following holds:

(1) The flow generated by $X_L$ preserves the transverse plane field $\langle R_{\alpha_-},R_{\alpha_+} \rangle$, where $R_{\alpha_-}$ and $R_{\alpha_+}$ are the Reeb vector fields of Theorem~\ref{cartanintro} (2);

(2) The pair $(M,X_L)$ can be extended to a Liouville structure $([-1,1]\times M,Y)$, such that $([-1,1]\times M,Y)\big|_{\{ 0\} \times M}=(M,X_L).$
\end{theorem}

\begin{proof}
The above argument yields (2). In order to prove (1), let ($\alpha_-=\alpha_u+\alpha_s,\alpha_+=\alpha_u-\alpha_s$) be the $(-1)$-Cartan structure of part (2) in Theorem~\ref{cartan}. We have
$$\mathcal{L}_{X_L}\alpha_+=\mathcal{L}_{X_L}(\alpha_u-\alpha_s)=\alpha_u+\alpha_s=\alpha_-.$$

Similarly, one can show $\mathcal{L}_{X_L}\alpha_-=\alpha_+$. Define the 1-form $\alpha_{X_L}$ by letting $\alpha_{X_L}(X_L)=1$ and $\alpha_{X_L}(\langle R_{\alpha_-},R_{\alpha_+}\rangle)=0$. The goal is prove $\mathcal{L}_{X_L}\alpha_{X_L}=0$. Note that by construction, $\alpha_{X_L}$ is differentiable along the flow and $(\mathcal{L}_{X_L}\alpha_{X_L})\wedge \alpha_{X_L}=0$.

Also, by plugging the basis $(R_{\alpha_-},R_{\alpha_+},X_L)$, we can observe
$$d\alpha_+=\alpha_{X_L} \wedge \alpha_- \ \ \ \text{ and }\ \ \ d\alpha_-=\alpha_{X_L}\wedge \alpha_+,$$
which implies
$$(\mathcal{L}_{X_L}\alpha_{X_L})\wedge \alpha_-=\mathcal{L}_{X_L}(\alpha_{X_L}\wedge \alpha_-)-\alpha_{X_L}\wedge \mathcal{L}_{X_L}\alpha_-$$
$$=\mathcal{L}_{X_L} d\alpha_+-\alpha_{X_L}\wedge\alpha_+=d(\mathcal{L}_{X_L} \alpha_+)-\alpha_{X_L}\wedge\alpha_+=d\alpha_--d\alpha_-=0.$$

Similarly, we have $(\mathcal{L}_{X_L}\alpha_{X_L})\wedge \alpha_+=0$. This yields $\mathcal{L}_{X_L}\alpha_{X_L}=0$, completing the proof.

\end{proof}

%%%%%%%%%%%%%%%%%%

\section{Applications to bi-contact surgeries}\label{6}

In this section, we discuss the implications of our work in the surgery theory of Anosov flows. Theorem~\ref{cartan} shows that for volume preserving Anosov flows, the Reeb vector fields associated with the supporting bi-contact structure $(\xi_-=\ker{\alpha_-},\xi_+=\ker{\alpha_+})$ can be contained in one another. In this case, if we push a periodic orbit of the flow $\gamma_0$, which is a Legendrian knot for both $\xi_-$ and $\xi_+$, along one of these Reeb vector fields, say $R_{\alpha_+}$, it stays Legendrian for $\xi_+$ (since $R_{\alpha_+}$ preserves $\xi_+$) and it immediately becomes transverse to $\xi_-$ (since $R_{\alpha_+}$ is a Legendrian vector field for $\xi_-$). We call such a knot a {\em Legendrian-transverse} knot.

In \cite{salmoi,salmoi2}, Salmoiraghi develops two flavors of {\em bi-contact surgery} operations in a neighborhood of a Legendrian-transverse knot. One of these operations is applied by cutting the manifold on an annulus, which is tangent to the flow and contains the Legendrian-transverse knot, and then glueing back using a Dehn twist \cite{salmoi}. The other operation is applied to a transverse annulus containing such knot \cite{salmoi2}. Moreover, he shows that using the coordinates coming from the above argument on the Reeb vector field, one can reconstruct the classical {\em Goodman surgery} in the neighborhood of a periodic orbit of an Anosov flow, using the bi-contact surgery of \cite{salmoi2}. However, notice that in the above argument, it suffices for the Reeb vector field of just one of the contact structures to be contained in the other contact structure, only in a small neighborhood of the periodic orbit one wants to apply the Goodman surgery on. In the following, we show that this is possible for {\em any} (possibly non volume preserving) Anosov flow. The main idea is to show that one can assume that the flow has constant divergence along a fixed periodic orbit.

\begin{theorem}\label{surg}
Let $\phi^t$ be an Anosov flow. Given any periodic orbit $\gamma_0$, there exists a supporting bi-contact structure $(\xi_-,\xi_+=\ker{\alpha_+})$, such that we have $R_{\alpha_+}\subset \xi_-$ in a regular neighborhood of $\gamma_0$. Therefore, there exists an isotopy $\{\gamma_t\}_{t\in [0,1]}$, which is supported in an arbitrary small neighborhood of $\gamma_0$, and $\gamma_t$ is a  Legendrian-transverse knot for any $0<t\leq 1$.
\end{theorem}

\begin{proof}
As discussed above, it is enough to show that there exists a tubular neighborhood $N(\gamma_0)$ and a pair of contact forms $\alpha_+$ and $\alpha_-$, such that $(\ker{\alpha_-},\ker{\alpha_+})$ is a supporting bi-contact structure for $X$ and $\alpha_-(R_{\alpha_+})=0$. It is easy to show that this is would have been possible, if the associated expansion rates were constant. The idea of the proof is to find an appropriate norm, which satisfies this condition on $\gamma_0$ and use the openness of the contact condition. To do so, we need an approximation technique similar to one used in the main theorem of \cite{hoz3}. The only caveat is that we need our approximation not to affect the preassigned norm on $\gamma_0$.

Let $T$ be the period of $\gamma_0$ and $\lambda^{\gamma_0}_u$ and $\lambda^{\gamma_0}_s$ be the eigenvalues of the return map along $\gamma_0$, corresponding to the unstable and stable directions, respectively. We can choose a $X$-differentiable norm on $TM/\langle X \rangle\big|_{\gamma_0}$, such that the induced expansion rates $r_u\big|_{\gamma_0}$ and $r_s\big|_{\gamma_0}$ are constants satisfying $e^{r_uT}=\lambda^{\gamma_0}_u$ and $e^{r_sT}=\lambda^{\gamma_0}_s$. We can then extend such norm to some $X$-differentiable norm on $TM/\langle X\rangle\simeq E^s\oplus E^u$ in a neighborhood of $\gamma_0$. Let $N(\gamma_0)$ be a possibly smaller neighborhood, on which $r_s<0<r_u$.

We define the %$C^0$ 
1-forms $\tilde{\alpha}_u$ and $\tilde{\alpha}_u$, by letting $\tilde{\alpha}_u(E^s)=\tilde{\alpha}_s(E^u)=0$ and $\tilde{\alpha}_u(e_u)=\tilde{\alpha}_s(e_s)=1$, where $e_s \in E^s$ and $e_u \in E^u$ are the unit vectors with respect to our norm. We can $C^0$-approximate $\tilde{\alpha}_u$ and $\tilde{\alpha}_u$ by $C^\infty$ 1-forms $\bar{\alpha}_u$ and $\bar{\alpha}_u$ and find $X$-differentiable functions $f_u$ and $f_s$ such that $f_u\bar{\alpha}_u(e_u)=f_s\bar{\alpha}_s(e_s)=1$. Using the following lemma, we can approximate these functions with appropriate $C^1$ functions to serve our goal.

\begin{lemma}\label{apporbit}
If $f$ is $X$-differentiable and $\eta$-Hölder continuous and $\gamma$ is a periodic orbit of $X$ (a $C^1$ flow on $n$-dimensional closed manifold $M$). Then, for any $\epsilon>0$, there exists a $C^1$ function $\bar{f}$, such that $f\big|_{\gamma}=\bar{f}\big|_{\gamma}$ and we have $|f-\bar{f}|<\epsilon$ and $|X\cdot f-X\cdot\bar{f}|<\epsilon$.
\end{lemma}

\begin{proof}
Let $N_\delta(\gamma)$ be a sufficiently small tubular neighborhood of $\gamma$, on which the function $d(x)$, measuring the distance of $x\in M$ from $\gamma$, is $C^1$, i.e. $N_\delta=\{x\in M | d(x)<\delta \}$. Let $\bar{d}(x)$ be any $C^1$ function on $M$, where $\bar{d}(x)=d(x)$ on $N_{\frac{\delta}{2}}(\gamma) \subset N_\delta(\gamma)$ and $\bar{d}(x)\neq 0$ everywhere.

 Now, we can write $f(x)=f^\gamma(x)+\bar{d}^{\frac{\eta}{2}}(x)g(x)$, where $f^\gamma(x)$ is any $C^1$ extension of $f\big|_{\gamma}$ on $M$ and $g(x)$ is well-defined, continuous and $X$-differentiable function on $M\backslash \gamma$. We extend $g$ to $M$ by letting $g(\gamma)=0$.

\begin{claim}
The function $g$ is continuous and $X$-differentiable on $N_\delta(\gamma)$.
\end{claim}

\begin{proof}
$$\lim_{d(x)\rightarrow 0}g(x)=\lim_{d(x)\rightarrow 0}\frac{f(x)-f^\gamma(x)}{d^{\frac{\eta}{2}}(x)}=\lim_{d(x)\rightarrow 0}\frac{f(x)-f^\gamma(x)}{d^\eta(x)} d^{\frac{\eta}{2}}(x)=0.$$

The last equality follows from $f$ being $\eta$-Hölder continuous. Therefore, $g$ is a continuous function on $N_\delta(\gamma)$ (in fact it is $\frac{\eta}{2}$-Hölder continuous). Moreover, g is $X$-differentiable in this neighborhood, since we have $X\cdot g\big|_\gamma=0$.
\end{proof}

Now, we use Lemma~4.2 of \cite{hoz3} to find a $C^1$ function $\bar{g}$, where $|g-\bar{g}|$ and $|X\cdot g-X\cdot\bar{g}|$ are arbitrary small. In fact, if we define $\bar{f}:=f^{\gamma_0}+\bar{d}^{\frac{\eta}{2}}\bar{g}$, we can find an approximation of $g$, such that $\bar{f}$ is the desired $C^1$ function. This completes the proof of Lemma~\ref{apporbit}.

\end{proof}

Let $\bar{f}_s$ and $\bar{f}_u$ be the approximations of $f_s$ and $f_u$ as in Lemma~\ref{apporbit}. As in \cite{hoz3}, we can define the $C^1$ contact forms $\alpha_+$ with $e'_s\in E^s\subset TM$, $e'_u \in E^u \subset TM$, $r'_u$ and $r'_s$ induced by $\alpha_+$ as in Remark~\ref{inducedvol}, such that when restricted to $\gamma_0$, we have $\alpha_+(e_u)=\bar{f}_u\bar{\alpha}_u(e_u)=f_u\bar{\alpha}_u(e_u)=1$ and similarly $\alpha_+(e_s)=1$, which yields $e_s=e'_s$, $e_u=e'_u$, $r_s=r'_s$ and $r_u=r'_u$ (we refer the reader to \cite{hoz3} for the the technical details of the approximations used in the definition. It is enough for us to know that the induced unit vectors and expansion rates from these approximating contact forms are arbitrary close to the ones we started with, while agreeing on $\gamma_0$ ). As in Equation~\ref{eq1}, we have $R_{\alpha'_+}=\frac{1}{r'_u-r'_s}\{-r'_ue'_s-r'_se'_u\}+q_+ X$, for some real function $q_+$. Let $\xi'_-:=\langle R_{\alpha_+},X\rangle$.

\begin{claim}
There exists a regular neighborhood $N(\gamma_0)$, on which $\xi'_-$ is a negative contact structure.
\end{claim}

\begin{proof}
Choose a 1-form $\alpha_-$ such that $\xi'_-:=\ker{\alpha_-}$ and $\alpha_-(e'_s+e'_u)>0$. Compute

$$d\alpha_-(R_{\alpha_+},X)=\frac{1}{r'_u-r'_s}\alpha_-([X,-r'_ue'_s-r'_se'_u])$$
$$\Rightarrow d\alpha_-(R_{\alpha_+},X)\big|_{\gamma_0}=\frac{r'_sr'_u}{r'_u-r'_s}\alpha_-(e'_s+e'_u)<0,$$
where in the last equality, we have used the fact that by construction, we have $X\cdot r'_s=X\cdot r_s=X\cdot r'_u=X\cdot r_u=0$ on $\gamma_0$. Thanks to the openness of the contact condition, $\xi'_-$ is a negative contact structure in some tubular neighborhood $N(\gamma_0)$.
\end{proof}

We can extend $\xi'\big|_{N(\gamma_0)}$ to some negative contact structure $\xi_-$ on $M$, such that the supporting bi-contact structure $(\xi_-,\xi_+=\ker{\alpha_+})$ has the desired properties.
\end{proof}

%\begin{remark}
%Goodman \cite{goodman} and Fried \cite{fried} surgeries on the periodic orbits of Anosov flows were both introduced in the early 1980s. Since then, the two operations were assumed by many authors to produce orbit equivalent Anosov flows and hence, the use of the term {\em Goodman-Fried surgery} has been common. However, establishing such equivalence is proven to be more subtle than assumed. In fact, this was proven only recently in the case of transitive Anosov flow by Shannon \cite{shannon}. In \cite{salmoi2}, Salmoiraghi claims that in an upcoming paper, he uses the Theorem~\ref{surg} above to establish the equivalence in the general case.
%\end{remark}

\begin{corollary}
The bi-contact surgeries of Salmoiraghi \cite{salmoi,salmoi2} can be applied in an arbitrary small neighborhood of a periodic orbit of any  Anosov flow. In particular, the bi-contact surgery of \cite{salmoi2} reconstructs the Goodman surgery.
\end{corollary}

%%%%%%%%%%%%%%%%%%%%%%%%%%%%%

\Addresses

\begin{thebibliography}{00}

\bibitem{anosov0} Anosov, Dmitry Victorovich. {\em Ergodic properties of geodesic flows on closed Riemannian manifolds of negative curvature.} Hamiltonian Dynamical Systems. CRC Press, 2020. 486-489.

\bibitem{anosov} Anosov, Dmitry Victorovich. {\em Geodesic flows on closed Riemannian manifolds of negative curvature.} Trudy Mat. Inst. Steklov 90.5 (1967).

\bibitem{threeflow} Araújo, Vítor, and Maria José Pacifico. {\em Three-dimensional flows.} Vol. 53. Springer Science \& Business Media, 2010.

\bibitem{hertz} Arroyo, Aubin, and Federico Rodriguez Hertz. {\em Homoclinic bifurcations and uniform hyperbolicity for three-dimensional flows."}Annales de l'IHP Analyse non linéaire. Vol. 20. No. 5. 2003.

\bibitem{asaokagoodman} Asaoka, Masayuki. {\em Goodman-Fried surgery, Birkhoff section, and R-covered Anosov flows.} arXiv preprint arXiv:2108.08215 (2021).

\bibitem{asaokaverj} Asaoka, Masayuki. {\em On invariant volumes of codimension-one Anosov flows and the Verjovsky conjecture.} Inventiones mathematicae 174.2 (2008): 435.

\bibitem{asaokareg} Asaoka, Masayuki. {\em Regular projectively Anosov flows on three-dimensional manifolds.} Annales de l'Institut Fourier. Vol. 60. No. 5. 2010.

\bibitem{adn} Asaoka, Masayuki, Emmanuel Dufraine, and Takeo Noda. {\em Homotopy classes of total foliations.} Commentarii Mathematici Helvetici 87.2 (2012): 271-302.

\bibitem{bart} Barthelmé, Thomas. {\em Anosov flows in dimension 3 preliminary version.} Preprint (2017).

\bibitem{mann} Barthelmé, Thomas, and Kathryn Mann. {\em Orbit equivalences of $\mathbb {R} $-covered Anosov flows and applications.} arXiv preprint arXiv:2012.11811 (2020).

\bibitem{bl} Blair, David E. {\em Riemannian geometry of contact and symplectic manifolds.} Springer Science \& Business Media, 2010.

\bibitem{blperrone} Blair, David E., and D. Perrone. {\em Conformally Anosov flows in contact metric geometry.} Balkan Journal of Geometry and Its Applications 3.2 (1998): 33-46.

\bibitem{bbu} Béguin, François, Christian Bonatti, and Bin Yu. {Building Anosov flows on 3–manifolds.} Geometry \& Topology 21.3 (2017): 1837-1930.

\bibitem{beyond} Bonatti, Christian, Lorenzo J. Díaz, and Marcelo Viana. {\em Dynamics beyond uniform hyperbolicity: A global geometric and probabilistic perspective.} Vol. 102. Springer Science \& Business Media, 2006.

\bibitem{bowden} Bowden, Jonathan, Christian Bonatti, and Rafael Potrie. {\em Some remarks on projective Anosov flows in hyperbolic 3-manifolds.} 2018 MATRIX Annals. Springer, Cham, 2020. 359-369.

\bibitem{colin} Colin, Vincent, and Sebastiao Firmo. {\em Paires de structures de contact sur les variétés de dimension trois.} Algebraic \& Geometric Topology 11.5 (2011): 2627-2653.

\bibitem{doering} Doering, Clauss. {\em persistently transitive vector fields on three-dimensional manifolds, Dynamical Systems and Bifurcation Theory.} Pitman Res. Notes Math. Ser. 160 (1987): 59-89.

\bibitem{confoliations} Eliashberg, Yakov, and William P. Thurston. {\em Confoliations.} Vol. 13. American Mathematical Soc., 1998.

\bibitem{hflows} Fisher, Todd, and Boris Hasselblatt. {\em Hyperbolic flows.} 2019.

\bibitem{foulon} Foulon, Patrick, and Boris Hasselblatt. {\em Contact Anosov flows on hyperbolic 3–manifolds.} Geometry \& Topology 17.2 (2013): 1225-1252.

\bibitem{zyg} Foulon, Patrick, and Boris Hasselblatt. {\em Zygmund strong foliations.} Israel Journal of Mathematics 138.1 (2003): 157-169.

\bibitem{anomal} Franks, John, and Bob Williams. {\em Anomalous Anosov flows.} Global theory of dynamical systems. Springer, Berlin, Heidelberg, 1980. 158-174.

\bibitem{fried} Fried, David. {\em Transitive Anosov flows and pseudo-Anosov maps.} Topology 22.3 (1983): 299-303.

\bibitem{geiges} Geiges, Hansjörg. {\em An introduction to contact topology.} Vol. 109. Cambridge University Press, 2008.

\bibitem{circle} Geiges, Hansjörg, and Jesús Gonzalo. {\em Contact circles on 3-manifolds.} Journal of Differential Geometry 46.2 (1997): 236-286.

\bibitem{circle2} Geiges, Hansjörg, and Jesús Gonzalo. {\em Contact geometry and complex surfaces.} Inventiones mathematicae 121.1 (1995): 147-209.

\bibitem{goodman} Goodman, Sue. {\em Dehn surgery on Anosov flows.} Geometric dynamics. Springer, Berlin, Heidelberg, 1983. 300-307.

\bibitem{ghys} Ghys, Étienne. {\em Rigidité différentiable des groupes fuchsiens."}Publications Mathématiques de l'Institut des Hautes Études Scientifiques 78.1 (1993): 163-185.

\bibitem{handel} Handel, Michael, and William P. Thurston. {\em Anosov flows on new three manifolds.} Inventiones mathematicae 59.2 (1980): 95-103.

\bibitem{regular} Hasselblatt, Boris. {\em Regularity of the Anosov splitting and of horospheric foliations.} Ergodic Theory and Dynamical Systems 14.4 (1994): 645-666.

\bibitem{inv} Hirsch, Morris W., Charles C. Pugh, and Michael Shub. {\em Invariant manifolds.} Bulletin of the American Mathematical Society 76.5 (1970): 1015-1019.

\bibitem{hoz1} Hozoori, Surena. {\em Dynamics and topology of conformally Anosov contact 3-manifolds.} Differential Geometry and its Applications 73 (2020): 101679.

\bibitem{hoz2} Hozoori, Surena. {\em Ricci curvature, Reeb flows and contact 3-manifolds.} The Journal of Geometric Analysis (2021): 1-26.

\bibitem{hoz3} Hozoori, Surena. {\em Symplectic Geometry of Anosov Flows in Dimension 3 and Bi-Contact Topology.} arXiv preprint arXiv:2009.02768 (2020).

\bibitem{hruder} Hurder, Steve, and Anatole Katok. {\em Differentiability, rigidity and Godbillon-Vey classes for Anosov flows.} Publications Mathématiques de l'Institut des Hautes Études Scientifiques 72.1 (1990): 5-61.

\bibitem{livreg} Livshits, A. N. {\em Homology properties of Y-systems.} Mathematical notes of the Academy of Sciences of the USSR 10.5 (1971): 758-763.

\bibitem{sinai} Livšic, A. N., Sinaĭ, Ja. G.{\em  Invariant measures that are compatible with smoothness for transitive $C$-systems.} (Russian) Dokl. Akad. Nauk SSSR 207 (1972), 1039–1041.

\bibitem{llave} de la Llave, Rafael, Jose Manuel Marco, and Roberto Moriyón. {\em Canonical perturbation theory of Anosov systems and regularity results for the Livsic cohomology equation.} Annals of Mathematics 123.3 (1986): 537-611.

\bibitem{marg} Margulis, Gregori Aleksandrovitsch. {\em On some aspects of the theory of Anosov systems.} On Some Aspects of the Theory of Anosov Systems. Springer, Berlin, Heidelberg, 2004. 1-71.

\bibitem{mitsumatsu} Mitsumatsu, Yoshihiko. {\em Anosov flows and non-Stein symplectic manifolds.} Annales de l'institut Fourier. Vol. 45. No. 5. 1995.

\bibitem{noda} Noda, Takeo. {\em Projectively Anosov flows with differentiable (un) stable foliations.} Annales de l'institut Fourier. Vol. 50. No. 5. 2000.

\bibitem{palis} Palis, J. Jr, and Welington De Melo. {\em Geometric theory of dynamical systems: an introduction.} Springer Science \& Business Media, 2012.

\bibitem{perrone} Perrone, Domenico. {\em Taut contact hyperbolas on three-manifolds.} Annals of Global Analysis and Geometry (2021): 1-31.

\bibitem{puj1} Pujals, Enrique. {\em From hyperbolicity to dominated splitting.} Inst. de Matemática Pura e Aplicada, 2006.

\bibitem{puj2} Pujals, Enrique R., and Martín Sambarino. {\em On the dynamics of dominated splitting.} Annals of mathematics (2009): 675-739.

\bibitem{salmoi} Salmoiraghi, Federico. {\em Surgery on Anosov flows using bi-contact geometry.} arXiv preprint arXiv:2104.07109 (2021).

\bibitem{salmoi2} Salmoiraghi, Federico. {\em Goodman surgery and projectively Anosov flows.} arXiv preprint arXiv:2202.01328 (2022).

\bibitem{shannon} Shannon, Mario. {\em Dehn surgeries and smooth structures on 3-dimensional transitive Anosov flows.}
General Topology [math.GN]. Université Bourgogne Franche-Comté, 2020. English. ffNNT :
2020UBFCK035ff. fftel-02951219f.

\bibitem{sharp} Coles, Solly, and Richard Sharp. {\em Helicity, linking and the distribution of null-homologous periodic orbits for Anosov flows.} arXiv preprint arXiv:2109.02985 (2021).

\bibitem{simic} Simić, Slobodan. {\em Codimension one Anosov flows and a conjecture of Verjovsky.} Ergodic Theory and Dynamical Systems 17.5 (1997): 1211-1231.

\bibitem{binyu} Yu, Bin. {\em Anosov flows on Dehn surgeries on the figure-eight knot.} arXiv preprint arXiv:2103.16348 (2021).


\end{thebibliography}
\end{document}